\newtheorem{remark}{\textbf{Remark}}
\newtheorem{example}{\emph{Example}}
\title{STOCHASTIC CONFORMAL MULTI-SYMPLECTIC METHOD FOR
DAMPED STOCHASTIC NONLINEAR SCHR\"{O}DINGER
EQUATION\thanks{This work was
supported by NNSFC (NO. 91530118, NO. 11021101, NO. 11290142, NO. 11471310 and NO. 11601032).}}
\author{Chuchu Chen\footnotemark[2]\thanks{Department of Mathematics, Michigan State University.
(\email{chenc118@msu.edu}).}
\and Jialin Hong\footnotemark[3]\thanks{Institute of Computational Mathematics and Scientific/Engineering Computing, Academy of Mathematics and Systems Science, Chinese Academy of Sciences, Beijing 100190, China.
(\email{hjl@lsec.cc.ac.cn}). }\
\and Lihai Ji\footnotemark[4]\thanks{Corresponding author. Institute of Applied Physics and Computational Mathematics, Beijing 100094, China.
(\email{jilihai@lsec.cc.ac.cn}). }}
\begin{document}
\maketitle
\slugger{mms}{xxxx}{xx}{x}{x--x}

\begin{abstract}
In this paper, we propose a stochastic conformal multi-symplectic method for a class of damped stochastic Hamiltonian partial differential equations in order to inherit the intrinsic properties, and apply the numerical method to solve a kind of damped stochastic nonlinear Schr\"{o}dinger equation with multiplicative noise. It is shown that the stochastic conformal multi-symplectic method preserves the discrete stochastic conformal multi-symplectic conservation law, the discrete charge exponential dissipation law almost surely, and we also deduce the recurrence
relation of the discrete global energy. Numerical experiments are preformed to verify the good performance of the proposed stochastic conformal multi-symplectic method, compared with a Crank-Nicolson type method. Finally, we present the mean square convergence result of the proposed numerical method in temporal direction numerically.

\end{abstract}

\begin{keywords}
stochastic conformal multi-symplectic method, damped stochastic Hamiltonian partial differential equations, damped stochastic nonlinear Schr\"{o}dinger equation, charge exponential dissipation law, energy evolution law, mean-square convergence
\end{keywords}

\begin{AMS}
60H15, 37K05, 65P10
\end{AMS}

\pagestyle{myheadings}
\thispagestyle{plain}
\markboth{CHUCHU CHEN, JIALIN HONG AND LIHAI JI}{STOCHASTIC CONFORMAL MULTI-SYMPLECTIC METHOD}

\section{Introduction}
As is well known, for deterministic Hamiltonian patial differential equations (PDEs), multi-symplectic methods were introduced by Marsden et al. \cite{RefMars}, and Bridges and Reich \cite{RefBrid}. There are now extensive research literatures concerning structure-preserving algorithms for Hamiltonian PDEs, including not only the construction of corresponding numerical methods but also the analysis on accuracy, efficiency and long-time behavior. For a detailed description of the methods as well as their implementation and applications, we refer readers to the review articles \cite{RefBrid2,RefWang} and references therein. For stochastic Hamiltonian PDEs, \cite{RefJiang} proposed stochastic multi-symplectic conservation law of stochastic Hamiltonian PDEs for the first time, and developed a stochastic multi-symplectic
method to solve stochastic nonlinear Schr\"{o}dinger (NLS) equation numerically. Recently, \cite{Refhai} derived stochastic multi-symplectic structure for three-dimensional stochastic Maxwell equations with additive noise by stochastic version of variational principle, and proposed a stochastic multi-symplectic
method that preserves the discrete stochastic multi-symplectic conservation law and stochastic energy dissipative property.

In the concluding remarks of \cite{RefBrid3}, a forced-damped nonlinear wave equation was mentioned with
a ``perturbed conservation law which takes a concise form and suggests that --- with increasing time --- average value of symplecticity is drained out of the field." In the context of Hamiltonian ordinary differential equations (ODEs) with linear damping, which are known as
conformal Hamiltonian systems \cite{Ref11}, \cite{RefMcla1} constructed numerical methods that preserve the so called conformal conservation law. Recently, \cite{RefMore,RefMore2} extended the conformal method to damped Hamiltonian PDEs and provided a framework of the construction of numerical methods which could exactly preserve the considered conformal multi-symplectic structure.

To the best of our knowledge, there is no reference about conformal multi-symplectic structure for damped stochastic Hamiltonian PDEs till now. This motivates us to investigate damped stochastic Hamiltonian PDEs with such structure, and propose numerical method which could preserve the discrete version of stochastic conformal multi-symplectic structure. To this end, we take the damped stochastic NLS equation as the keystone
mainly because it describes many physical phenomena and plays an important role in fluid dynamics, nonlinear optics, plasma physics, etc., see \cite{RefFalk,RefFalk2} and reference therein. Damping effect cannot be neglected in this case and has to be counterbalanced by amplifiers. \cite{RefGuy} presented a multilevel resolution method for the weakly damped stochastic NLS equation. This method gives better results with significantly shorter CPU time
than the other numerical methods used in the literature. In \cite{RefDebu}, a damped stochastic NLS equation driven by an additive noise was studied. And by using a coupling method, the authors established convergence of the Markov transition semi-group toward a unique invariant probability measure.

The rest of this paper is organized as follows. In section 2, we
begin with some preliminary results about damped stochastic NLS equation and show that the charge satisfies an exponential dissipation law, moreover, we present the relationship fullfilled by the energy. In section 3, we propose and analyze the stochastic conformal multi-symplectic method for the damped stochastic Hamiltonian PDEs. Section 4 is
contributed to the theoretical analysis of properties of the proposed
stochastic conformal multi-symplectic method for the damped stochastic NLS equation, including the discrete charge exponential dissipation law and the recursion formula of the discrete energy. In section 5, numerical experiments are
performed to testify the effectiveness of the stochastic conformal multi-symplectic method. And we investigate the mean-square convergence result numerically. Concluding remarks are given in Section 6. Some proofs and calculations are postponed to the final appendix.

\section{Damped stochastic nonlinear Schr\"{o}dinger equation}
In order to simplify the notations, in this paper we consider one-dimensional damped stochastic NLS equation. However, the approach and some of theoretical results can be extended to the general d-dimensional ($d\geq 2$) problem. In the case of a multiplicative noise, we consider the equation:
\begin{equation}\label{NLS}
du+(\alpha u-iu_{xx}-i|u|^{2}u)dt=i\varepsilon u\circ dW,~~t\geq0,~x\in\mathbb{R},
\end{equation}
with an initial condition
\begin{equation}\label{Initial}
u(x,0)=u_{0}(x).
\end{equation}
 Here, $u=u(x,t)$ is a complex-valued function, $W$ is a
real-valued Wiener process, $\alpha\geq 0$ is the absorption coefficient, $\varepsilon>0$ describes the size of
the noise. The $\circ$ in the last term in ({\color{blue}\ref{NLS}}) means that the product is of
Stratonovich type. Let $(\Omega,\mathcal{F},P)$ be the probability space with filtration $\{\mathcal{F}_{t}:~t\geq0\}$. Let $\{\beta_{k}:~k\in \mathbb{N}\}$ be a sequence of independent Brownian motions which are associated
with $\{\mathcal{F}_{t}:~t\geq0\}$. Let $\{e_{k}\}_{k\in \mathbb{N}}$ be an orthonormal basis of $L^{2}(\mathbb{R},\mathbb{R})$, $\phi\in\mathcal{L}_{2}(L^2,H^{\gamma})$ which is the space consisting of Hilbert-Schmidt operators from $L^{2}(\mathbb{R})$ into $H^{\gamma}(\mathbb{R})$ $(\gamma> 0)$. Then
\begin{equation}
W(t,x,\omega)=\sum_{k=0}^{\infty}\beta_{k}(t,\omega)\phi e_{k}(x),~~t\geq0,~x\in \mathbb{R},~\omega\in\Omega,
\end{equation}
is a Wiener process on the space of square integrable functions on $\mathbb{R}$, with covariance
operator $\phi\phi^{\ast}$.

We will use the equivalent It\^{o} equation. Defining the function
\begin{equation}\label{Fphi}
F_{\phi}(x)=\sum_{k=0}^{\infty}(\phi e_{k}(x))^{2},~x\in\mathbb{R}
\end{equation}
which does not depend on the basis $\{e_{k}\}_{k\in\mathbb{N}}$, the equivalent It\^{o} equation is
\begin{equation}\label{Ito}
du+(\alpha u+\frac{\varepsilon^{2}}{2}F_{\phi}u-iu_{xx}-i|u|^{2}u)dt=i\varepsilon udW.
\end{equation}

The existence and uniqueness of solutions of (\ref{NLS}) in space $H^1({\mathbb R}^d)$ were studied in \cite{RefBouard3,RefBRZ} for
more general nonlinear term case.

Moreover, equation (\ref{NLS}) possesses the charge exponential dissipation law which is an important criteria of measuring whether a
numerical simulation is good or not. It is stated in the following theorem.
\begin{theorem}\label{chargele}
The damped stochastic NLS equation (\ref{NLS}) possesses the charge exponential dissipation law almost surely
\begin{equation}\label{charge}
\int_{\mathbb{R}}|u(x,t)|^{2}dx=e^{-2\alpha t}\int_{\mathbb{R}}|u_{0}(x)|^{2}dx.
\end{equation}
\end{theorem}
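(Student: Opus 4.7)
The plan is to apply Itô calculus (or equivalently the Stratonovich chain rule) to the squared modulus $|u|^2=u\bar u$, show pointwise that the stochastic corrections cancel because the noise coefficient is purely imaginary, then integrate over $x\in\mathbb{R}$ and use integration by parts to kill the dispersive term. This will reduce the spatial integral $N(t):=\int_{\mathbb{R}}|u|^2\,dx$ to a deterministic linear ODE in $t$.

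Concretely, I would work with the Stratonovich form (\ref{NLS}) so that the ordinary chain rule applies: writing $d(u\bar u)=\bar u\circ du+u\circ d\bar u$ and substituting from (\ref{NLS}) and its complex conjugate, the drift contributions involving $|u|^{2}u$ cancel (they give $\pm i|u|^{4}$), and the noise parts $\pm i\varepsilon |u|^{2}\circ dW$ cancel exactly because $u$ appears multiplicatively through the imaginary factor $i\varepsilon$. What survives is
\begin{equation*}
d(|u|^2)=-2\alpha|u|^2\,dt+i(\bar u\, u_{xx}-u\,\bar u_{xx})\,dt.
\end{equation*}
Alternatively, working in the Itô formulation (\ref{Ito}), the Itô correction $\tfrac12\, du\, d\bar u=\varepsilon^{2}|u|^{2}F_{\phi}\,dt$ arising from $(dW)^{2}\sim F_{\phi}\,dt$ cancels exactly against the $\tfrac{\varepsilon^{2}}{2}F_{\phi}u$ drift correction inserted by the Itô–Stratonovich conversion, yielding the same identity.

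Next I would integrate this identity over $x\in\mathbb{R}$. The dispersive term is a perfect $x$-derivative, $i(\bar u\, u_{xx}-u\,\bar u_{xx})=i\partial_{x}(\bar u\,u_{x}-u\,\bar u_{x})$, so assuming sufficient decay of $u(\cdot,t)\in H^{1}(\mathbb{R})$ (provided by the well-posedness results from \cite{RefBouard3,RefBRZ} cited just before the theorem), its integral over $\mathbb{R}$ vanishes. Hence $N(t)=\int_{\mathbb{R}}|u(x,t)|^{2}dx$ satisfies, almost surely,
\begin{equation*}
dN(t)=-2\alpha\, N(t)\,dt,
\end{equation*}
a pathwise ODE whose solution is $N(t)=e^{-2\alpha t}N(0)$, which is exactly (\ref{charge}).

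The only delicate step is the cancellation in the stochastic calculus, which I expect to be the main technical obstacle: one must justify the formal manipulation of $(dW)^{2}=F_{\phi}\,dt$ in the infinite-dimensional multiplicative setting, or equivalently the use of the Stratonovich chain rule for a cylindrical Wiener process with covariance $\phi\phi^{\ast}$. A rigorous route is to expand $W=\sum_{k}\beta_{k}\phi e_{k}$, apply the finite-dimensional Itô formula to truncations, use the Hilbert–Schmidt assumption $\phi\in\mathcal{L}_{2}(L^{2},H^{\gamma})$ to pass to the limit, and invoke Fubini to interchange the spatial integral with the stochastic integral; the resulting boundary terms vanish thanks to the $H^{1}$ regularity of $u$. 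Everything else is algebraic.
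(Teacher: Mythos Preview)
Your proposal is correct and follows essentially the same approach as the paper: both rely on the It\^o formula and the cancellation of the stochastic and nonlinear contributions (since the noise enters through the purely imaginary factor $i\varepsilon$), leaving only the linear damping. The only cosmetic difference is that the paper applies the infinite-dimensional It\^o formula directly to the functional $\Phi(t,u)=e^{2\alpha t}\int_{\mathbb{R}}|u|^{2}\,dx$ (with the integrating factor built in and Fr\'echet derivatives computed on $L^{2}$), whereas you compute $d(|u|^{2})$ pointwise, integrate in $x$, and then solve the resulting ODE for $N(t)$; the underlying cancellations are identical.
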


\begin{proof}
To investigate the evolution relationship of the charge. We apply It\^{o} formula to the function
$$\Phi(t,u)=e^{2\alpha t}\int_{\mathbb{R}}|u|^{2}dx.$$
It holds
\begin{equation}
\begin{split}
\frac{\partial\Phi}{\partial t}&=2\alpha e^{2\alpha t}\int_{\mathbb{R}}|u|^{2}dx,\\
D\Phi(u)(\psi)&=2e^{2\alpha t}Re\Big(\int_{\mathbb{R}}\bar{u}\psi dx\Big),\\
D^{2}\Phi(u)(\psi,\varphi)&=2e^{2\alpha t}Re\Big(\int_{\mathbb{R}}\bar{\psi}\varphi dx\Big),\\
\end{split}
\end{equation}
where $D,~D^{2}$ denote the first and second Fr\'{e}chet derivative, respectively. $\bar{u}$ is the conjugate of function $u$.

It\^{o} formula leads to
\begin{equation}
\begin{split}
\Phi(t,u(t))=&\Phi(0,u_{0})+2\alpha\int_{0}^{t}\int_{\mathbb{R}}e^{2\alpha s}|u|^{2}dxds\\
&+2\int_{0}^{t}e^{2\alpha s}Re\Big(\int_{\mathbb{R}}\bar{u}\Big[-\alpha u-\frac{\varepsilon^{2}}{2}F_{\phi}u+iu_{xx}+i|u|^{2}u\Big]dx\Big)ds\\
&+2\int_{0}^{t}e^{2\alpha s}Re\Big(\int_{\mathbb{R}}i\varepsilon|u|^{2}dx\Big)dW(s)+\int_{0}^{t}e^{2\alpha s}\int_{\mathbb{R}}\varepsilon^{2}|u|^{2}F_{\phi}dxds.
\end{split}
\end{equation}
Since
\begin{equation}
\begin{split}
\int_{0}^{t}e^{2\alpha s}\int_{\mathbb{R}}&\Big(\alpha|u(s)|^{2}+\frac{\varepsilon^{2}}{2}F_{\phi}|u(s)|^{2}\Big)dxds\\
&=\alpha\int_{0}^{t}e^{2\alpha s}\int_{\mathbb{R}}|u(s)|^{2}dxds+\int_{0}^{t}e^{2\alpha s}\int_{\mathbb{R}}\frac{\varepsilon^{2}}{2}F_{\phi}|u(s)|^{2}dxds.
\end{split}
\end{equation}
Combing all of these equations, we obtain
\begin{equation*}
\Phi(t,u(t))=\Phi(0,u_{0}),
\end{equation*}
which means
\begin{equation*}
\int_{\mathbb{R}}|u(x,t)|^{2}dx=e^{-2\alpha t}\int_{\mathbb{R}}|u_{0}(x)|^{2}dx.
\end{equation*}
Thus the proof is finished.
\end{proof}

\begin{remark}
Note that the above Theorem states that the $L^2({\mathbb R})$-norm of the solution of \eqref{NLS}
 decays exponentially with exponent $\alpha$.
If $\alpha=0$, the charge exponential dissipation law (\ref{charge}) becomes
\begin{equation*}
\int_{\mathbb{R}}|u(x,t)|^{2}dx=\int_{\mathbb{R}}|u_{0}(x)|^{2}dx.
\end{equation*}
This is consistent with the result of the Proposition 4.4 in \cite{RefBouard3}.
\end{remark}

The Hamiltonian plays an important role in the study of the nonlinear Schr\"odinger equation. It is a conserved quantity in
the absence of noise and damping. It is defined by
\begin{equation}
H(u)=\frac{1}{2}\int_{\mathbb{R}}|\nabla u|^{2}dx-\frac{1}{4}\int_{\mathbb{R}}|u|^{4}dx.
\end{equation}
We now state the following energy evolution law for damped stochastic NLS equation (\ref{NLS}).

\begin{theorem}
Let $u_0\in H^1({\mathbb R})$ and $\phi\in {\mathcal L}_2(L^2,H^1)$.
The damped stochastic NLS equation (\ref{NLS}) has the following global energy evolution law almost surely,
\begin{equation}\label{energy}
\begin{split}
H(u(t))=e^{-2\alpha t}H(u_{0})&+\frac{\alpha}{2}\int_{0}^{t}\int_{\mathbb{R}}e^{-2\alpha(t-s)}|u(s)|^{4}dxds\\
&-\int_{0}^{t}Im\Big(\int_{\mathbb{R}}\varepsilon e^{-2\alpha(t-s)}\bar{u}\nabla udx\Big)\nabla dW(s)\\
&+\frac{\varepsilon^{2}}{2}\sum_{k=0}^{\infty}\int_{0}^{t}\int_{\mathbb{R}}e^{-2\alpha(t-s)}|u(s)|^{2}|\nabla \phi e_{k}|^{2}dxds.
\end{split}
\end{equation}
\end{theorem}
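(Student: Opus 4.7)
The strategy is to apply It\^{o}'s formula to the transformed functional $\Psi(t,u)=e^{2\alpha t}H(u)$, using the It\^{o} formulation \eqref{Ito} rather than the Stratonovich form, and then to multiply through by $e^{-2\alpha t}$ at the end so that the three deterministic drift contributions and the Brownian correction organize themselves into the four terms appearing in \eqref{energy}.

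First I would compute the partial derivative in time, $\partial_t\Psi = 2\alpha e^{2\alpha t}H(u)$, and the first Fr\'echet derivative
$$D\Psi(u)(\psi)=-e^{2\alpha t}\,Re\!\int_{\mathbb{R}}(\bar u_{xx}+|u|^2\bar u)\psi\,dx,$$
obtained by an integration by parts on the gradient part of $H$. The second derivative $D^2\Psi(u)(\psi,\psi)$ is then the sum of $e^{2\alpha t}\int|\nabla\psi|^2\,dx$ and the Hessian of the quartic term, which, after writing $u=u_1+iu_2$, equals $-|u|^2|\psi|^2-2(Re(\bar u\psi))^2$.

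Having these ingredients, the bulk of the proof is a bookkeeping exercise: I would pair $D\Psi(u)$ with each piece of the It\^{o} drift. The cubic piece $i(u_{xx}+|u|^2u)$ gives zero because $Re\int(\bar u_{xx}+|u|^2\bar u)\cdot i(u_{xx}+|u|^2 u)\,dx = Re(i\int|u_{xx}+|u|^2u|^2\,dx)=0$, thanks to the skew-Hamiltonian structure. The linear damping $-\alpha u$ produces $-\alpha(\int|\nabla u|^2-\int|u|^4)\,dx$, and when combined with $\partial_t\Psi$ this collapses, using $2H(u)=\int|\nabla u|^2-\tfrac12\int|u|^4\,dx$, to exactly $\tfrac{\alpha}{2}e^{2\alpha t}\int|u|^4\,dx$, accounting for the second term on the right of \eqref{energy}. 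The martingale part of It\^{o}'s formula, $D\Psi(u)(i\varepsilon u\,dW)$, after integration by parts in $x$ produces the stochastic integrand $\varepsilon\,Im\int(\phi e_k)_x\bar u u_x\,dx$, matching the formal expression $Im(\int\varepsilon\bar u\nabla u\,dx)\nabla dW$ upon summation against $d\beta_k$.

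The delicate step, and the one I expect to be the main obstacle, is the cancellation between the $-\tfrac{\varepsilon^2}{2}F_\phi u$ drift and the It\^{o} correction $\tfrac12\sum_k D^2\Psi(u)(i\varepsilon u\phi e_k,i\varepsilon u\phi e_k)$. Expanding $|\nabla(i\varepsilon u\phi e_k)|^2 = \varepsilon^2[((\phi e_k)_x)^2|u|^2+2\phi e_k(\phi e_k)_x Re(\bar u u_x)+(\phi e_k)^2|u_x|^2]$ and noting that $Re(\bar u\cdot i\varepsilon u\phi e_k)=0$ kills the $|u|^4$-contribution from the quartic Hessian, one finds that the $\int F_\phi|u|^4$ terms cancel, the $\int F_\phi|\nabla u|^2$ terms cancel, and an integration by parts on $\int(F_\phi)_x(|u|^2)_x\,dx$ eliminates the remaining $\int(F_\phi)_{xx}|u|^2\,dx$ contribution, leaving precisely $\tfrac{\varepsilon^2}{2}\sum_k\int|\nabla\phi e_k|^2|u|^2\,dx$. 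Finally, integrating in $s$ from $0$ to $t$ and multiplying by $e^{-2\alpha t}$ converts every $e^{2\alpha s}$ into $e^{-2\alpha(t-s)}$, delivering \eqref{energy}. The hypotheses $u_0\in H^1$ and $\phi\in\mathcal L_2(L^2,H^1)$ are used to justify all boundary terms in the integrations by parts and to ensure the stochastic integral is well-defined.
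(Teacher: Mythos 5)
Your proposal is correct and follows essentially the same route as the paper: an application of It\^{o}'s formula to the Hamiltonian with the integrating factor $e^{2\alpha t}$ (the paper applies It\^{o} to $H(u)$ and then integrates the resulting linear SDE, which is equivalent to your choice of working with $e^{2\alpha t}H(u)$ from the start, exactly as in the paper's proof of the charge dissipation law). Your verification of the cancellation between the It\^{o}-to-Stratonovich drift $-\tfrac{\varepsilon^{2}}{2}F_{\phi}u$ and the quadratic-variation correction is in fact more detailed than what the paper records.
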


\begin{proof}
The proof is similar to that of Theorem \ref{chargele} by applying It\^o formula to functional $H(u)$.  Since $H(u)$ is Fr\'{e}chet derivable, the derivatives of $H(u)$ along directions $\psi$ and $(\psi,\phi)$ are as follows:
\begin{equation*}
\begin{split}
D H(u)(\psi)&=Re\int_{\mathbb{R}}\nabla\bar{u}\nabla\psi dx-Re\int_{\mathbb{R}}|u|^{2}\bar{u}\psi dx,\\
D^{2}H(\psi,\varphi)&=Re\int_{\mathbb{R}}\nabla\bar{\psi}\nabla\varphi dx-Re\int_{\mathbb{R}}|u|^{2}\bar{\psi}\varphi dx-2\int_{\mathbb{R}}Re(\bar{u}\psi)Re(\bar{u}\varphi)dx.
\end{split}
\end{equation*}
From It\^{o} formula, we have
\begin{equation}\label{hamil}
\begin{split}
d H(u)=&DH(u)(du)+\frac12 D^2H(u)(du,du)\\
=&-2\alpha H(u)dt+\frac{\alpha}{2}\int_{\mathbb R}|u|^4dx dt-\varepsilon Im \int_{\mathbb R} \bar{u}\nabla u dx d(\nabla W(t))\\
&+\frac{\varepsilon^2}{2}\int_{\mathbb R}|u|^2\sum_{k\in {\mathbb N}}\big(\nabla(\phi e_{k})\big)^2dxdt,
\end{split}
\end{equation}
which leads to
\begin{equation*}
\begin{split}
H(u(t))=&e^{-2\alpha t}H(u_{0})+\frac{\alpha}{2}\int_{0}^{t}\int_{\mathbb{R}}e^{-2\alpha(t-s)}|u(s)|^{4}dxds\\
&-\int_{0}^{t}Im\int_{\mathbb{R}}\varepsilon e^{-2\alpha(t-s)}u\nabla \bar{u}\nabla dW(s)dx\\
&+\frac{\varepsilon^{2}}{2}\sum_{k=0}^{\infty}\int_{0}^{t}\int_{\mathbb{R}}e^{-2\alpha(t-s)}|u(s)|^{2}|\nabla\phi e_{k}|^{2}dxds.
\end{split}
\end{equation*}

\end{proof}

\begin{remark}
If $\alpha=0$, the energy evolution law (\ref{energy}) becomes
\begin{equation*}
\begin{split}
H(u(t))=H(u_{0})&-\int_{0}^{t}Im\Big(\int_{\mathbb{R}}\varepsilon u\nabla \bar{u}dx\Big)\nabla dW(s)\\
&+\frac{\varepsilon^{2}}{2}\sum_{k=0}^{\infty}\int_{0}^{t}\int_{\mathbb{R}}|u(s)|^{2}|\nabla \phi e_{k}|^{2}dxds.
\end{split}
\end{equation*}
This is consistent with the result of the Proposition 4.5 in \cite{RefBouard3}.
\end{remark}

Furthermore, we have the following estimate for the Hamiltonian.
\begin{theorem}\label{est_Hamil}
  Let $u_0\in H^1({\mathbb R})$ and $\phi\in {\mathcal L}_{2}(L^2,H^2)$. Then there exists a positive constant $C$ such that
  \begin{equation}
  {\mathbb E}[H(u(t))]\leq e^{-\alpha t}{\mathbb E}[H(u_0)]+C.
  \end{equation}
\end{theorem}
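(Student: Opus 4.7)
The plan is to take expectation of the global energy evolution identity \eqref{energy} (equivalently, of the It\^o differential form \eqref{hamil}) and reduce the claim to a scalar Gronwall-type inequality for $f(t) := \mathbb{E}[H(u(t))]$. The stochastic integral term vanishes in expectation because its integrand $\int_{\mathbb R}\bar u\,\nabla u\,dx$ is square-integrable against the Brownian motions $\beta_k$ under the assumed $H^1$-regularity of $u$, which follows from $u_0\in H^1({\mathbb R})$ and $\phi\in\mathcal{L}_2(L^2,H^1)$.

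To handle the It\^o correction $\tfrac{\varepsilon^2}{2}\int |u|^2 \sum_k (\nabla \phi e_k)^2\, dx$, I would use that in one spatial dimension $H^1({\mathbb R}) \hookrightarrow L^\infty({\mathbb R})$, so the hypothesis $\phi \in \mathcal{L}_2(L^2, H^2)$ gives $\sum_k |\nabla \phi e_k(x)|^2 \leq C\|\phi\|_{\mathcal{L}_2(L^2, H^2)}^2$ uniformly in $x$. Applying the charge exponential dissipation law of Theorem~\ref{chargele} then reduces the spatial integral to $Ce^{-2\alpha s}\|u_0\|_{L^2}^2$, and after convolution against $e^{-2\alpha(t-s)}$ this contribution is of size $Cte^{-2\alpha t}$, uniformly bounded in $t$.

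The main obstacle is the quartic self-interaction $\|u\|_{L^4}^4$, since it implicitly contains $\|\nabla u\|_{L^2}^2$ and hence $H(u)$ itself; unless its prefactor is kept sharply controlled, the resulting closed inequality produces an exponentially growing bound instead of a constant one. To overcome this I would invoke the one-dimensional Gagliardo-Nirenberg inequality $\|u\|_{L^4}^4 \leq C_{\mathrm{GN}}\|u\|_{L^2}^3\|\nabla u\|_{L^2}$, apply Young's inequality at weight $\delta \in (0,2)$, and substitute $\|\nabla u\|_{L^2}^2 = 2H(u)+\tfrac12\|u\|_{L^4}^4$ to obtain, after rearrangement,
\[
\|u\|_{L^4}^4 \leq \frac{2\delta}{1-\delta/2}\, H(u) + \frac{C_{\mathrm{GN}}^2}{4\delta(1-\delta/2)}\|u\|_{L^2}^6.
\]
The critical step is to choose $\delta = 2/3$ so that the leading coefficient equals exactly $2$; any larger value would produce an effective drift with decay rate strictly smaller than the target rate $\alpha$. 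After taking expectation and using charge dissipation to bound $\|u(s)\|_{L^2}^6 \leq \|u_0\|_{L^2}^6$, this becomes $\mathbb{E}\|u(s)\|_{L^4}^4 \leq 2f(s) + C$.

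Substituting these two bounds into the It\^o form \eqref{hamil} and taking expectation yields
\[
\frac{d}{dt} f(t) \leq -2\alpha\, f(t) + \frac{\alpha}{2}\bigl(2f(t) + C\bigr) + C' = -\alpha\, f(t) + C'',
\]
with $C''$ depending on $\|u_0\|_{L^2}$, $\|\phi\|_{\mathcal{L}_2(L^2, H^2)}$, $\varepsilon$ and $\alpha$ but not on $t$. A direct comparison (Gronwall's lemma) then gives $f(t) \leq e^{-\alpha t} f(0) + C''/\alpha$, establishing the claim with $C := C''/\alpha$.
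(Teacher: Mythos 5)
Your proof is correct and follows the same overall strategy as the paper: take expectations in the It\^o identity \eqref{hamil}, discard the martingale term, bound the It\^o correction via $H^1(\mathbb{R})\hookrightarrow L^{\infty}(\mathbb{R})$ together with the charge dissipation law of Theorem \ref{chargele}, absorb the quartic term through Gagliardo--Nirenberg, and close with a Gronwall comparison. The one place where you genuinely depart from (and improve on) the paper is the treatment of the quartic term. The paper invokes $\int_{\mathbb{R}}|u|^4dx\le 2\|\nabla u\|_{L^2}^2+C\|u\|_{L^2}^6$ and then asserts $dH(u)+\alpha H(u)dt\le dM(t)+(C_1+C_2)dt$, but with the coefficient $2$ in front of $\|\nabla u\|_{L^2}^2$ this implication does not follow: substituting $\|\nabla u\|_{L^2}^2=2H(u)+\tfrac12\|u\|_{L^4}^4$ into that bound collapses to the vacuous statement $0\le 4H(u)+C_1$, and the required inequality $\tfrac{\alpha}{2}\|u\|_{L^4}^4\le \alpha H(u)+C$ is equivalent to having a coefficient no larger than $2/3$. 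Your interpolation $\|u\|_{L^4}^4\le C_{\mathrm{GN}}\|u\|_{L^2}^3\|\nabla u\|_{L^2}$ followed by Young's inequality with weight $\delta=2/3$, which yields exactly $\|u\|_{L^4}^4\le 2H(u)+C\|u\|_{L^2}^6$, is precisely what is needed to turn the drift $-2\alpha H+\tfrac{\alpha}{2}\|u\|_{L^4}^4$ into $-\alpha H+C''$; any larger weight would degrade the decay rate below $\alpha$, as you correctly note. Two minor caveats, both shared with the paper: the vanishing of the stochastic integral in expectation strictly speaking requires a localization/stopping-time argument to justify the martingale property, and the final division by $\alpha$ in Gronwall requires $\alpha>0$ (for $\alpha=0$ the It\^o correction forces at best linear growth of $\mathbb{E}[H(u(t))]$, so the theorem must be read with $\alpha>0$).
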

\begin{proof}
  We start from equality \eqref{hamil}, i.e.,
  \begin{equation*}
    dH(u)+2\alpha H(u)dt=\frac{\alpha}{2}\int_{\mathbb R}|u|^4dx dt
+\frac{\varepsilon^2}{2}\int_{\mathbb R}|u|^2\sum_{k\in {\mathbb N}}\big(\nabla(\phi e_{k})\big)^2dxdt+dM(t),
  \end{equation*}
  where $dM(t)=-\varepsilon Im \int_{\mathbb R} \bar{u}\nabla u dx d(\nabla W(t))$.

  Note that we use Gagliardo-Nirenberg inequality to have
  \[
  \int_{\mathbb R}|u|^4dx\leq 2\|\nabla u\|_{L^2({\mathbb R})}^{2}+C\|u\|_{L^2({\mathbb R})}^{6}\leq  2\|\nabla u\|_{L^2({\mathbb R})}^{2}+C_1
  \]
  and use the embedding of $L^{\infty}({\mathbb R})$ into $ H^1({\mathbb R})$ to have
  \[
  \frac{\varepsilon^2}{2}\int_{\mathbb R}|u|^2\sum_{k\in {\mathbb N}}\big(\nabla(\phi e_{k})\big)^2dx
  \leq \frac{\varepsilon^2}{2}\|u\|_{L^2({\mathbb R})}^2\|\phi\|_{{\mathcal L}_2(L^2,H^2)}^2\leq C_2,
  \]
  where we utilize Theorem \ref{chargele}.

  Thus we have
  \[
  dH(u)+\alpha H(u)dt\leq dM(t)+(C_1+C_2)dt,
  \]
  i.e.,
  \[
  d{\mathbb E}[H(u)]+\alpha {\mathbb E}[H(u)]dt\leq (C_1+C_2)dt.
  \]
  By multiplying $e^{\alpha t}$ to both sides of the above inequality and then taking integral from $0$ to $t$, we finish the proof.
\end{proof}

In order to give an estimate of the $H^1({\mathbb R})$-norm of the solution of equation \eqref{NLS},
 we utilize again the  Gagliardo-Nirenberg inequality
\[
\|u\|_{L^4({\mathbb R})}^{4}\leq \|\nabla u\|_{L^2(\mathbb R)}^{2}+C\|u\|_{L^2({\mathbb R})}^{6},\quad \forall u\in H^{1}({\mathbb R})
\]
to have
\[
\frac12 \|\nabla u\|_{L^2({\mathbb R})}^2=H(u)+\frac14 \|u\|_{L^4({\mathbb R})}^{4}
\leq H(u)+\frac14 \|\nabla u\|_{L^2(\mathbb R)}^{2}+\frac14 C\|u\|_{L^2({\mathbb R})}^{6},
\]
which means that there exists a positive constant $C$ such that
\[
\|\nabla u\|_{L^2({\mathbb R})}^2\leq 4 H(u)+C.
\]
Hence we know that under Theorem \ref{est_Hamil}, the $H^1({\mathbb R})$-norm of the solution of equation \eqref{NLS} has a uniform bound.

\section{Stochastic conformal multi-symplecitc method}
In this section, we propose a stochastic conformal multi-symplectic method for damped stochastic Hamiltonian PDEs. Finally, two examples are given to illustrate how to construct the Hamiltonian structure.

Consider the following damped stochastic Hamiltonian PDEs:
\begin{equation}\label{SFDHPDE}
Mz_{t}+Kz_{x}=\nabla S_{1}(z)+\nabla S_{2}(z)\circ \dot{\chi}+Dz,~~z\in\mathbb{R}^{d},
\end{equation}
where, $M$ and $K$ are skew-symmetric matrices, $D=-\frac{a}{2}M-\frac{b}{2}K$, and $\dot{\chi}=\frac{dW(t)}{dt}$ is a real-valued white noise which is
delta correlated in time, and either smooth or delta correlated in space. The gradients
of $S_{1}$ and $S_{2}$ are with respect to $z$

According to the mathematical definition of the noise $\dot{\chi}dt=d_{t}W$, the damped stochastic Hamiltonian PDEs (\ref{SFDHPDE}) can be rewritten into the form:
\begin{equation}
Md_{t}z+Kz_{x}dt=\nabla S_{1}(z)dt+\nabla S_{2}(z)\circ d_{t}W+Dzdt,~~z\in\mathbb{R}^{d}.
\end{equation}

We have the following theorem. As the proof is postponed to Appendix A.
\begin{theorem}\label{conlemma}
  System \eqref{SFDHPDE} preserves the stochastic conformal conservation law
  \begin{equation}\label{conformal conservation law}
    d_{t}\omega(t,x)+\partial_{x}\kappa(t,x)dt=(-a\omega(t,x)-b\kappa(t,x))dt,
  \end{equation}
  which means the following integral equality,
  \begin{align}\label{conformal conservation law_integral}
    \int_{x_{0}}^{x_{1}}\omega(t_{1},x)dx+\int_{t_{0}}^{t_{1}}\kappa(t,x_{1})dt-\int_{x_{0}}^{x_{1}}\omega(t_{0},x)dx
    -\int_{t_{0}}^{t_{1}}\kappa(t,x_{0})dt\nonumber\\
    =-\int_{x_{0}}^{x_{1}}\int_{t_{0}}^{t_{1}}a\omega(t,x)dtdx
    -\int_{x_{0}}^{x_{1}}\int_{t_{0}}^{t_{1}}b\kappa(t,x)dtdx.
  \end{align}
  Here $\omega=dz\wedge Mdz$ and $\kappa=dz\wedge Kdz$ are the differential 2-forms associated
with the two skew-symmetric matrices $M$ and $K$, respectively, and $(t_0, t_1)\times(x_0,x_1)$ is
the local definition domain of $z(t,x)$.
\end{theorem}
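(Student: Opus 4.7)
The plan is to mimic the classical proof of the multi-symplectic conservation law, tracking the two new ingredients: the linear damping term $Dz$ and the Stratonovich multiplicative noise. The fact that the noise is written in Stratonovich form is precisely what lets the ordinary chain rule (and hence the usual exterior calculus manipulation) apply, so the stochastic piece will not cause extra It\^o correction terms.

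First, I would take the exterior derivative (in the dependent variable $z$) of both sides of \eqref{SFDHPDE}. Since $d$ commutes with $\partial_t$ and $\partial_x$, and since $M$, $K$, $D$ are constant matrices, this yields the variational equation
\begin{equation*}
M\,(dz)_t + K\,(dz)_x = \nabla^2 S_1(z)\,dz + \nabla^2 S_2(z)\,dz \circ \dot\chi + D\,dz.
\end{equation*}
Next, I would wedge this equation on the left with $dz$. Using the skew-symmetry of $M$ and $K$, the key identities
\begin{equation*}
dz \wedge M(dz)_t = \tfrac{1}{2}\,\partial_t(dz \wedge M\,dz) = \tfrac{1}{2}\,\partial_t\omega,
\qquad
dz \wedge K(dz)_x = \tfrac{1}{2}\,\partial_x\kappa,
\end{equation*}
come out as in the deterministic case. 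The Hessians $\nabla^2 S_1(z)$ and $\nabla^2 S_2(z)$ are symmetric, so $dz\wedge \nabla^2 S_j(z)\,dz = 0$ for $j=1,2$, which kills both the drift-nonlinearity term and, crucially, the Stratonovich noise term. Finally,
\begin{equation*}
dz \wedge D\,dz = -\tfrac{a}{2}\,dz \wedge M\,dz - \tfrac{b}{2}\,dz \wedge K\,dz = -\tfrac{a}{2}\omega - \tfrac{b}{2}\kappa.
\end{equation*}

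Collecting terms and multiplying by $2$, written in the $d_t$ notation appropriate to the Stratonovich SPDE, gives exactly \eqref{conformal conservation law}. To obtain the integral form \eqref{conformal conservation law_integral}, I would integrate the pointwise identity over the rectangle $(t_0,t_1)\times(x_0,x_1)$ and apply the fundamental theorem of calculus in each variable separately; since every term is either a total $t$-derivative, a total $x$-derivative, or a bounded Stratonovich differential, the interchange of the spatial integration with the stochastic integration is harmless provided $z$ is smooth enough on its local domain, which is part of the hypothesis.

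The main obstacle I anticipate is the careful treatment of the noise term. Working in Stratonovich form, the exterior derivative and the wedge product act as in ordinary calculus, so the vanishing $dz\wedge \nabla^2 S_2(z)\,dz = 0$ is legitimate and no quadratic-variation correction appears; one must, however, justify this step rigorously by either interpreting the wedge algebra pathwise (which is the standard convention in stochastic geometric mechanics) or approximating $W$ by smooth processes, computing the conservation law, and passing to the limit using Wong--Zakai. Once this point is accepted, the rest of the argument is essentially algebraic, and the damping contribution $Dz$ automatically supplies the right-hand side $-a\omega - b\kappa$ of the conformal law by construction of $D$.
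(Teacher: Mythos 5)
Your proof is correct, and it reaches the conclusion by a genuinely different (and more classical) route than the paper. You establish the pointwise differential identity \eqref{conformal conservation law} first: take the exterior derivative of \eqref{SFDHPDE} in $z$ to get the variational equation, wedge with $dz$, use skew-symmetry of $M,K$ to write $dz\wedge M(dz)_t=\tfrac12 d_t\omega$ and $dz\wedge K(dz)_x=\tfrac12\partial_x\kappa$, kill the drift and Stratonovich terms via symmetry of the Hessians, and read off $dz\wedge Dz$ as $-\tfrac{a}{2}\omega-\tfrac{b}{2}\kappa$; the integral form then follows by integrating over the rectangle. The paper instead proves the integral identity \eqref{conformal conservation law_integral} directly, without ever writing down the local law: it expands $\omega$ and $\kappa$ in the basis $d(z_{t_0}^{x_0})^{\ell}\wedge d(z_{t_0}^{x_0})^{k}$ using the Jacobian of the flow map, collects the coefficients into $\mathcal{C}_{\ell,k}+\mathcal{D}_{\ell,k}+\mathcal{E}_{\ell,k}$, observes these vanish at $t_1=t_0$, and shows their $t_1$-differential is zero by substituting the componentwise variational equations (the paper's \eqref{eq2}--\eqref{eq3}, which are exactly the coordinate form of your variational equation). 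The two arguments rely on the same cancellation mechanism — symmetry of $\nabla^2 S_1,\nabla^2 S_2$ against antisymmetry of the wedge, plus the definition of $D$ — but your version is shorter and exhibits the local conservation law explicitly, while the paper's version works with honest partial derivatives of the solution with respect to initial data and so sidesteps the need to interpret the abstract wedge calculus pathwise. Your closing remark about justifying the Stratonovich manipulations (pathwise interpretation or Wong--Zakai) addresses the one genuine subtlety; the paper glosses over the same point.
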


A naturally question is what kind of numerical methods has the ability of preserving
the discrete form of the stochastic conformal multi-symplectic conservation law when they are
applied to the damped stochastic Hamiltonian PDEs? If a numerical method can preserve the discrete stochastic conformal multi-symplectic conservation law, we call it stochastic conformal multi-symplectic method in this paper.

\subsection{Stochastic conformal multi-symplectic method}
In order to construct the stochastic conformal multi-symplectic method, we
introduce a uniform grid $(t_{n},x_{j})\in\mathbb{R}^{2}$ with mesh length $\Delta t$ in the temporal direction
and mesh length $\Delta x$ in the spatial direction, respectively. The value of the function $z(t,x)$ at the
mesh point $(t_{n},x_{j})$ is denoted by $z_{j}^{n}$. In addition, define the difference operators
\begin{equation}\label{df}
\delta_{t}^{\frac{a}{2}}z_{j}^{n}=\frac{z_{j}^{n+1}-e^{-\frac{a}{2}\Delta t}z_{j}^{n}}{\Delta t},~~\delta_{x}^{\frac{b}{2}}z_{j}^{n}=\frac{z_{j+1}^{n}-e^{-\frac{b}{2}\Delta x}z_{j}^{n}}{\Delta x},
\end{equation}
and the averaging operators
\begin{equation}\label{av}
A_{t}^{\frac{a}{2}}z_{j}^{n}=\frac{z_{j}^{n+1}+e^{-\frac{a}{2}\Delta t}z_{j}^{n}}{2},~~A_{x}^{\frac{b}{2}}z_{j}^{n}=\frac{z_{j+1}^{n}+e^{-\frac{b}{2}\Delta x}z_{j}^{n}}{2}.
\end{equation}

These two operators have the following two properties.

\begin{lemma}\cite{RefMore2}\label{lemma1}
The operators (\ref{df}) and (\ref{av}) commute, i.e.,
\begin{equation}
\begin{split}
\delta_{\xi}^{\alpha}A_{\eta}^{\beta}z_{j}^{n}&=A_{\eta}^{\beta}\delta_{\xi}^{\alpha}z_{j}^{n},\\
\delta_{\xi}^{\alpha}\delta_{\eta}^{\beta}z_{j}^{n}&=\delta_{\eta}^{\beta}\delta_{\xi}^{\alpha}z_{j}^{n},\\
A_{\xi}^{\alpha}A_{\eta}^{\beta}z_{j}^{n}&=A_{\eta}^{\beta}A_{\xi}^{\alpha}z_{j}^{n}.
\end{split}
\end{equation}
\end{lemma}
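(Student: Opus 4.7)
The plan is to treat the four operators as polynomials with constant coefficients in two commuting shift operators, after which all three commutation identities become automatic rather than requiring nine separate verifications. Define shift operators $T_t$ and $T_x$ on grid functions by $T_t z_j^n = z_j^{n+1}$ and $T_x z_j^n = z_{j+1}^n$, and rewrite the four operators from \eqref{df}--\eqref{av} as
\[
\delta_t^{a/2} = \tfrac{1}{\Delta t}\bigl(T_t - e^{-a\Delta t/2}I\bigr),\qquad A_t^{a/2} = \tfrac{1}{2}\bigl(T_t + e^{-a\Delta t/2}I\bigr),
\]
\[
\delta_x^{b/2} = \tfrac{1}{\Delta x}\bigl(T_x - e^{-b\Delta x/2}I\bigr),\qquad A_x^{b/2} = \tfrac{1}{2}\bigl(T_x + e^{-b\Delta x/2}I\bigr),
\]
so that each is a linear polynomial with real scalar coefficients in $T_t$ or $T_x$ alone.

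The single elementary fact I would record is that $T_t$ and $T_x$ commute, since
\[
T_t T_x z_j^n = T_t z_{j+1}^n = z_{j+1}^{n+1} = T_x z_j^{n+1} = T_x T_t z_j^n,
\]
reflecting that the two shifts act on independent discrete indices. Constant multiples commute trivially with every linear operator, so every polynomial in $\{T_t, T_x\}$ with constant coefficients commutes with every other such polynomial. All three identities of the lemma then follow at once: when the subscripts differ, the claim reduces to the commutation of $T_t$ and $T_x$ together with the commutation of constants with shifts; when the subscripts coincide, it reduces to the triviality that two polynomials in a single variable commute with one another.

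I do not expect any real obstacle. The one piece of bookkeeping is to be explicit that the damping factors $e^{-a\Delta t/2}$ and $e^{-b\Delta x/2}$ are genuine scalars, independent of the grid indices $(n,j)$, and hence commute with the shifts; this is precisely what makes the operator-algebra argument work and what distinguishes the conformal operators from a hypothetical variant with index-dependent weights. An entirely equivalent alternative would be to expand each of the three identities directly from \eqref{df} and \eqref{av} in one or two lines, but the shift-operator formulation makes the underlying commutative structure transparent and removes redundant computation.
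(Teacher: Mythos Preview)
Your argument is correct. The paper does not supply its own proof of this lemma; it is simply quoted from \cite{RefMore2}, so there is no in-paper argument to compare against. Your shift-operator formulation, writing each of the four operators as a first-degree polynomial in $T_t$ or $T_x$ with constant scalar coefficients and then invoking $T_tT_x=T_xT_t$, is a clean and fully rigorous way to obtain all three identities at once. The one substantive point you flag---that the weights $e^{-a\Delta t/2}$ and $e^{-b\Delta x/2}$ are genuine scalars independent of $(n,j)$---is exactly what makes the argument go through, and you have identified it correctly.
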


\begin{lemma}\cite{RefMore2}\label{lemma2}
The operators (\ref{df}) and (\ref{av}) satisfy a discrete product rule
\begin{equation}
\delta_{\zeta}^{\alpha}\langle\phi,\psi\rangle=\Big<\delta_{\zeta}^{\frac{\alpha}{2}}\phi,A_{\zeta}^{\frac{\alpha}{2}}\psi\Big>+
  \Big<A_{\zeta}^{\frac{\alpha}{2}}\phi,\delta_{\zeta}^{\frac{\alpha}{2}}\psi\Big>.
  \end{equation}
\end{lemma}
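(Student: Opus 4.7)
The plan is to verify the identity by direct algebraic expansion of both sides using the definitions in \eqref{df} and \eqref{av}, exploiting the crucial observation that $e^{-\frac{\alpha}{2}\Delta\zeta}\cdot e^{-\frac{\alpha}{2}\Delta\zeta}=e^{-\alpha\Delta\zeta}$, which is what allows the two half-step exponentials in the operators $\delta_\zeta^{\alpha/2}$ and $A_\zeta^{\alpha/2}$ to combine consistently with the single full-step exponential in $\delta_\zeta^{\alpha}$.

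First I would abbreviate, for a fixed direction $\zeta$ with step size $\Delta\zeta$, the two shifted values of any grid function $\phi$ as $\phi^{+}$ and $\phi^{-}$ (so that e.g.\ $\delta_\zeta^{\alpha}\phi=(\phi^{+}-e^{-\alpha\Delta\zeta}\phi^{-})/\Delta\zeta$). With this shorthand the left-hand side becomes
\begin{equation*}
\delta_\zeta^{\alpha}\langle\phi,\psi\rangle=\frac{\langle\phi^{+},\psi^{+}\rangle-e^{-\alpha\Delta\zeta}\langle\phi^{-},\psi^{-}\rangle}{\Delta\zeta}.
\end{equation*}
Then I would expand the right-hand side by writing out each half-step difference and average operator. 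Using bilinearity of $\langle\cdot,\cdot\rangle$, the sum of the two inner products on the right produces four terms from each, for a total of eight terms scaled by $1/(2\Delta\zeta)$.

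The key step is the cancellation. The two ``diagonal'' contributions from each bracket give $\langle\phi^{+},\psi^{+}\rangle$ and $-e^{-\alpha\Delta\zeta}\langle\phi^{-},\psi^{-}\rangle$ (using $e^{-\alpha\Delta\zeta/2}\cdot e^{-\alpha\Delta\zeta/2}=e^{-\alpha\Delta\zeta}$), while the two ``mixed'' contributions $\pm e^{-\alpha\Delta\zeta/2}\langle\phi^{+},\psi^{-}\rangle$ and $\mp e^{-\alpha\Delta\zeta/2}\langle\phi^{-},\psi^{+}\rangle$ appear with opposite signs between the two brackets and cancel pairwise. What remains is $2\langle\phi^{+},\psi^{+}\rangle-2e^{-\alpha\Delta\zeta}\langle\phi^{-},\psi^{-}\rangle$ divided by $2\Delta\zeta$, which coincides with the expression above for the left-hand side.

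There is essentially no analytic obstacle here, the proof being purely algebraic; the only care needed is bookkeeping of the four sign-times-exponential combinations in the mixed terms to confirm the cancellation, and making sure the computation does not depend on whether $\zeta$ is the temporal or spatial direction (the formulas are symmetric in the roles of $\Delta t$ and $\Delta x$, so it suffices to carry out the calculation generically). The identity therefore reduces to the standard continuous Leibniz/midpoint identity, twisted by the conformal factor $e^{-\alpha\Delta\zeta/2}$.
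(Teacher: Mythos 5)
Your verification is correct: the expansion of the two half-step products yields the diagonal terms $\langle\phi^{+},\psi^{+}\rangle$ and $-e^{-\alpha\Delta\zeta}\langle\phi^{-},\psi^{-}\rangle$ (via $e^{-\alpha\Delta\zeta/2}\cdot e^{-\alpha\Delta\zeta/2}=e^{-\alpha\Delta\zeta}$) while the mixed terms cancel pairwise, and only bilinearity of $\langle\cdot,\cdot\rangle$ is needed (which holds since $z\in\mathbb{R}^{d}$). The paper itself states this lemma without proof, citing \cite{RefMore2}, so your direct algebraic expansion is exactly the standard argument and there is nothing to compare against in the text.
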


Now, we consider the following full-discrete form
\begin{equation}\label{method}
  M(\delta_{t}^{\frac{a}{2}}A_{x}^{\frac{b}{2}}z_{j}^{n})+K(\delta_{x}^{\frac{b}{2}}A_{t}^{\frac{a}{2}}z_{j}^{n})=\nabla S_{1}(A_{t}^{\frac{a}{2}}A_{x}^{\frac{b}{2}}z_{j}^{n})+\nabla S_{2}(A_{t}^{\frac{a}{2}}A_{x}^{\frac{b}{2}}z_{j}^{n})\dot{\chi}_{j}^{n}.
\end{equation}

Here,
\begin{equation*}
\dot{\chi}_{j}^{n}=\frac{W(t_{n+1},x_{j})-W(t_{n},x_{j})}{\Delta t},
\end{equation*}
with
\begin{equation}\label{WW}
W(t_{n+1},x_{j})-W(t_{n},x_{j})=\sum_{m=1}^{\infty}\sqrt{\eta_{m}}e_{m}(x_{j})\Big(\beta_{m}(t_{n+1})-\beta_{m}(t_{n})\Big).
\end{equation}
For the full-discrete method (\ref{method}), we have the following result.
\begin{theorem}
  Discretization \eqref{method} preserves the discrete stochastic conformal multi-symplectic conservation law almost surely
  \begin{equation}
    \delta_{t}^{a}\Big<MA_{x}^{\frac{b}{2}}U_{j}^{n},A_{x}^{\frac{b}{2}}V_{j}^{n}\Big>+\delta_{x}^{b}\Big<KA_{t}^{\frac{a}{2}}U_{j}^{n},A_{t}^{\frac{a}{2}}V_{j}^{n}\Big>=0,
  \end{equation}
  where $U_{j}^{n}$ and $V_{j}^{n}$ are any solutions of the variational equation of \eqref{method}.
\end{theorem}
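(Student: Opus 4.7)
The plan is to mimic, at the discrete level, the classical symplectic-form argument used for the continuous conformal conservation law in Theorem \ref{conlemma}. Because the noise term $\dot{\chi}_{j}^{n}$ is simply a (random) scalar multiplier within a single time step, the argument is pathwise and can proceed almost surely; the stochastic nature will not create any structural obstacle.

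First, I would derive the variational equation associated with the scheme \eqref{method} by formally differentiating it with respect to $z_{j}^{n}$ in an arbitrary direction. Since $\nabla S_{1}$ and $\nabla S_{2}$ are gradients, their Jacobians are the Hessians $S_{1}''$ and $S_{2}''$, which are \emph{symmetric} matrices. Writing $\bar{z}_{j}^{n}:=A_{t}^{a/2}A_{x}^{b/2}z_{j}^{n}$ and abbreviating $H''_{j,n}:=S_{1}''(\bar{z}_{j}^{n})+S_{2}''(\bar{z}_{j}^{n})\dot{\chi}_{j}^{n}$, any variation $U_{j}^{n}$ must satisfy
\begin{equation*}
M\,\delta_{t}^{a/2}A_{x}^{b/2}U_{j}^{n}+K\,\delta_{x}^{b/2}A_{t}^{a/2}U_{j}^{n}=H''_{j,n}\,A_{t}^{a/2}A_{x}^{b/2}U_{j}^{n},
\end{equation*}
and the same relation holds for a second solution $V_{j}^{n}$.

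Next, I would take the inner product of the $U$-equation with $A_{t}^{a/2}A_{x}^{b/2}V_{j}^{n}$ and the $V$-equation with $A_{t}^{a/2}A_{x}^{b/2}U_{j}^{n}$, and subtract. Because $H''_{j,n}$ is symmetric, the right-hand sides cancel exactly, leaving
\begin{equation*}
\bigl\langle M\delta_{t}^{a/2}A_{x}^{b/2}U_{j}^{n},A_{t}^{a/2}A_{x}^{b/2}V_{j}^{n}\bigr\rangle-\bigl\langle M\delta_{t}^{a/2}A_{x}^{b/2}V_{j}^{n},A_{t}^{a/2}A_{x}^{b/2}U_{j}^{n}\bigr\rangle+(\text{analogous }K\text{ terms})=0.
\end{equation*}
Using the skew-symmetry of $M$ (so $\langle Ma,b\rangle=-\langle a,Mb\rangle=\langle b,Ma\rangle\cdot(-1)^{2}$ after one swap, giving the combination $\langle M\delta_{t}^{a/2}A_{x}^{b/2}U,A_{t}^{a/2}A_{x}^{b/2}V\rangle+\langle MA_{t}^{a/2}A_{x}^{b/2}U,\delta_{t}^{a/2}A_{x}^{b/2}V\rangle$), and similarly for $K$, these terms match precisely the right-hand side of the discrete product rule of Lemma \ref{lemma2} applied with $\zeta=t,\alpha=a$ and with $\zeta=x,\alpha=b$, respectively.

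Finally, I would invoke Lemma \ref{lemma1} to pull $M$ and $K$ freely through the averaging and difference operators (they act only on $z$, not on the matrices), which lets me reassemble the two combinations as
\begin{equation*}
\delta_{t}^{a}\bigl\langle MA_{x}^{b/2}U_{j}^{n},A_{x}^{b/2}V_{j}^{n}\bigr\rangle+\delta_{x}^{b}\bigl\langle KA_{t}^{a/2}U_{j}^{n},A_{t}^{a/2}V_{j}^{n}\bigr\rangle=0,
\end{equation*}
which is the discrete stochastic conformal multi-symplectic conservation law. The main obstacle, and the place where one has to be careful, is the bookkeeping of which averaging/differencing operator lands on which factor after the subtraction: one must verify that exactly the pairing prescribed by Lemma \ref{lemma2} (namely $\delta_{\zeta}^{\alpha/2}$ on one side and $A_{\zeta}^{\alpha/2}$ on the other, in both orders) is produced by the combination of the $U$- and $V$-equations together with the skew-symmetry of $M$ and $K$. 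Once this matching is observed, everything falls into place and the argument holds pathwise, hence almost surely.
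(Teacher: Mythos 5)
Your proposal is correct and follows essentially the same route as the paper's proof: the variational equation with symmetric Hessians $S_1''$, $S_2''$, the skew-symmetry of $M$ and $K$, and Lemmas \ref{lemma1} and \ref{lemma2} are exactly the ingredients the paper combines, only read in the opposite direction (you assemble the conservation law from the paired variational equations, while the paper expands the conservation law and then substitutes them). The cancellation mechanism and the operator bookkeeping you flag are precisely those of the paper.
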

\begin{proof}
Lemma \ref{lemma1} and Lemma \ref{lemma2} imply
  \begin{align}
    \delta_{t}^{a}\Big<MA_{x}^{\frac{b}{2}}U_{j}^{n},A_{x}^{\frac{b}{2}}V_{j}^{n}\Big>&=\Big<M\delta_{t}^{\frac{a}{2}}A_{x}^{\frac{b}{2}}U_{j}^{n},A_{t}^{\frac{a}{2}}A_{x}^{\frac{b}{2}}V_{j}^{n}\Big>
    +\Big<MA_{t}^{\frac{a}{2}}A_{x}^{\frac{b}{2}}U_{j}^{n},\delta_{t}^{\frac{a}{2}}A_{x}^{\frac{b}{2}}V_{j}^{n}\Big>\nonumber\\
   & =\Big<M\delta_{t}^{\frac{a}{2}}A_{x}^{\frac{b}{2}}U_{j}^{n},A_{t}^{\frac{a}{2}}A_{x}^{\frac{b}{2}}V_{j}^{n}\Big>-\Big<M\delta_{t}^{\frac{a}{2}}A_{x}^{\frac{b}{2}}V_{j}^{n},A_{t}^{\frac{a}{2}}A_{x}^{\frac{b}{2}}U_{j}^{n}\Big>,
  \end{align}
  and
  \begin{align}
    \delta_{x}^{b}\Big<KA_{t}^{\frac{a}{2}}U_{j}^{n},A_{t}^{\frac{a}{2}}V_{j}^{n}\Big>=\Big<K\delta_{x}^{\frac{b}{2}}A_{t}^{\frac{a}{2}}U_{j}^{n},A_{x}^{\frac{b}{2}}A_{t}^{\frac{a}{2}}V_{j}^{n}\Big>
    -\Big<K\delta_{x}^{\frac{b}{2}}A_{t}^{\frac{a}{2}}V_{j}^{n},A_{x}^{\frac{b}{2}}A_{t}^{\frac{a}{2}}U_{j}^{n}\Big>.
  \end{align}
  Since $U_j^n$ and $V_j^n$ are any solutions of the variational equation for \eqref{method}, we know that they satisfy the following equation
  \begin{equation}
     M(\delta_{t}^{\frac{a}{2}}A_{x}^{\frac{b}{2}}dz_{j}^{n})+K(\delta_{x}^{\frac{b}{2}}A_{t}^{\frac{a}{2}}dz_{j}^{n})= (S_{1})_{zz}(A_{t}^{\frac{a}{2}}A_{x}^{\frac{b}{2}}dz_{j}^{n})+(S_{2})_{zz}(A_{t}^{\frac{a}{2}}A_{x}^{\frac{b}{2}}dz_{j}^{n})\dot{\chi}_{j}^{n}.
  \end{equation}
  Then
  \begin{equation}
  \begin{split}
    &\delta_{t}^{a}\Big<MA_{x}^{\frac{b}{2}}U_{j}^{n},A_{x}^{\frac{b}{2}}V_{j}^{n}\Big>+\delta_{x}^{b}\Big<KA_{t}^{\frac{a}{2}}U_{j}^{n},A_{t}^{\frac{a}{2}}V_{j}^{n}\Big>\\
   & =\Big<(S_{1})_{zz}(A_{t}^{\frac{a}{2}}A_{x}^{\frac{b}{2}}U_{j}^{n})+(S_{2})_{zz}(A_{t}^{\frac{a}{2}}A_{x}^{\frac{b}{2}}U_{j}^{n})\dot{\chi}_{j}^{n},
   A_{t}^{\frac{a}{2}}A_{x}^{\frac{b}{2}}V_{j}^{n}\Big>\\
   &\quad\quad-\Big<(S_{1})_{zz}(A_{t}^{\frac{a}{2}}A_{x}^{\frac{b}{2}}V_{j}^{n})+(S_{2})_{zz}(A_{t}^{\frac{a}{2}}A_{x}^{\frac{b}{2}}V_{j}^{n})\dot{\chi}_{j}^{n},
   A_{t}^{\frac{a}{2}}A_{x}^{\frac{b}{2}}U_{j}^{n}\Big>\\
   &=0,
   \end{split}
  \end{equation}
  where the last equality is due to the symmetry of $(S_1)_{zz}$ and $(S_2)_{zz}$.

  This completes the proof.
\end{proof}
\subsection{Examples}
In fact, a large class of damped stochastic PDEs can be represented as (\ref{SFDHPDE}). In this subsection, we give two concrete examples.

\begin{example}[Damped stochastic KdV equation]
Consider the following damped Stochastic KdV equation with additive noise \cite{RefGord}:
\begin{equation}\label{KdV}
u_{t}+6uu_{x}+u_{xxx}=\alpha u+\gamma\dot{\chi},~\mathrm{in}~ U\times \mathbb{R}^{+}
\end{equation}
where $U\subset\mathbb{R}$ is a bounded open set with a smooth boundary $\partial U$, $\alpha> 0$ is the damping coefficient, $\gamma$ represents the amplitude of the noise source. The coefficient 6 is presented as a matter of convenience and historical significance.

By expanding (\ref{KdV}) as a first-order system of PDEs,
\begin{equation*}
\begin{split}
u_x&=v,~~\phi_{x}=u,\\
\frac{1}{2}u_{t}+\omega_{x}&=\frac{\alpha}{2}u+\gamma\dot{\chi},\\
-\frac{1}{2}\phi_{t}-v_{x}&=3u^{2}-\omega-\frac{\alpha}{2}\phi.
\end{split}
\end{equation*}

Then, we have state variable $z = (\phi,u,v,\omega)^{T}$, Hamiltonian
\begin{equation*}
S_{1}(z)=u^{3}-u\omega+\frac{1}{2}v^{2},~~S_{2}(z)=\gamma \phi
\end{equation*}
and the pair of skew-symmetric matrices $M$, $K$ and $D$,
\begin{equation*}
M=\left(
\begin{array}{cccc}
0&\frac{1}{2}&0&0\\
-\frac{1}{2}&0&0&0\\
0&0&0&0\\
0&0&0&0
\end{array}\right),~K=\left(
\begin{array}{cccc}
0&0&0&1\\
0&0&-1&0\\
0&1&0&0\\
-1&0&0&0
\end{array}\right),~D=\left(
\begin{array}{cccc}
0&\frac{\alpha}{2}&0&0\\
-\frac{\alpha}{2}&0&0&0\\
0&0&0&0\\
0&0&0&0
\end{array}\right),
\end{equation*}
with $a=-\alpha,~b=0$.
\end{example}

\begin{example}[Damped stochastic NLS equation] Let $u(t,x)=p(t,x)+iq(t,x)$, where $p,~q$ are real-valued functions, equation (\ref{NLS}) can be separated into
\begin{equation}
\begin{split}
dp+\Big(\alpha p+q_{xx}+q(p^{2}+q^{2})\Big)dt&=-\varepsilon q\circ dW(t),\\
dq+\Big(\alpha q-p_{xx}-p(p^{2}+q^{2})\Big)dt&=\varepsilon p\circ dW(t).
\end{split}
\end{equation}

By introducing two additional new variables, $v=p_x,~\omega=q_x$, and defining a state
variable $z=(p,q,v,\omega)^{T}$ , the equation above can be transformed to the compact form
\begin{equation}
Mz_{t}+Kz_{x}=\nabla S_{1}(z)+\nabla S_{2}(z)\circ \dot{\chi}+Dz,~~z\in\mathbb{R}^{4},
\end{equation}
where
\begin{equation}\label{KL}
M=\left(
\begin{array}{cccc}
0&-1&0&0\\
1&0&0&0\\
0&0&0&0\\
0&0&0&0
\end{array}\right),~K=\left(
\begin{array}{cccc}
0&0&1&0\\
0&0&0&1\\
-1&0&0&0\\
0&-1&0&0
\end{array}\right),~D=\left(
\begin{array}{cccc}
0&\alpha&0&0\\
-\alpha&0&0&0\\
0&0&0&0\\
0&0&0&0
\end{array}\right),
\end{equation}
with $a=2\alpha,~b=0$. And
\begin{equation}\label{SS}
S_{1}(z)=-\frac{1}{2}(v^{2}+\omega^{2})-\frac{1}{4}(p^{2}+q^{2})^{2},~S_{2}(z)=-\frac{\varepsilon}{2}(p^{2}+q^{2}).
\end{equation}
\end{example}

In the following section, we consider the damped stochastic NLS equation (\ref{NLS}) to illustrate the merits of stochastic conformal multi-symplectic method.

\section{Conservative properties of the stochastic conformal multi-symplectic method}
This section investigates the global conservative properties of the stochastic conformal multi-symplectic method (\ref{method}) for the damped stochastic NLS equation.

For convenience, we substitute matrices (\ref{KL}) and Hamiltonian (\ref{SS}) into (\ref{method}), and rewrite the numerical method componentwise
\begin{equation}
\begin{split}
\delta_{t}^{\alpha}A_{x}^{0}p_{j}^{n}+\delta_{x}^{0}A_{t}^{\alpha}\omega_{j}^{n}&=-\Big((A_{t}^{\alpha}A_{x}^{0}p_{j}^{n})^{2}+(A_{t}^{\alpha}A_{x}^{0}q_{j}^{n})^{2}\Big)A_{t}^{\alpha}A_{x}^{0}q_{j}^{n}-\varepsilon A_{t}^{\alpha}A_{x}^{0}q_{j}^{n}\dot{\chi}_{j}^{n},\\
\delta_{t}^{\alpha}A_{x}^{0}q_{j}^{n}-\delta_{x}^{0}A_{t}^{\alpha}v_{j}^{n}&=~~\Big((A_{t}^{\alpha}A_{x}^{0}p_{j}^{n})^{2}+(A_{t}^{\alpha}A_{x}^{0}q_{j}^{n})^{2}\Big)A_{t}^{\alpha}A_{x}^{0}p_{j}^{n}+\varepsilon A_{t}^{\alpha}A_{x}^{0}p_{j}^{n}\dot{\chi}_{j}^{n},\\
\delta_{x}^{0}A_{t}^{\alpha}p_{j}^{n}&=~~A_{t}^{\alpha}A_{x}^{0}v_{j}^{n},\\
\delta_{x}^{0}A_{t}^{\alpha}q_{j}^{n}&=~~A_{t}^{\alpha}A_{x}^{0}\omega_{j}^{n}.
\end{split}
\end{equation}

Recalling that $u=p+iq$, and eliminating the additionally introduced variables $v$ and
$\omega$, we get the equation of $u$:
\begin{equation}\label{sn}
\begin{split}
\Big(\delta_{t}^{\alpha}A_{x}^{0}u_{j+1}^{n}+\delta_{t}^{\alpha}A_{x}^{0}u_{j}^{n}\Big)&-2i(\delta_{x}^{0})^{2}A_{t}^{\alpha}u_{j}^{n}\\
&=i\Big(|A_{t}^{\alpha}A_{x}^{0}u_{j}^{n}|^{2}A_{t}^{\alpha}A_{x}^{0}u_{j}^{n}+|A_{t}^{\alpha}A_{x}^{0}u_{j+1}^{n}|^{2}A_{t}^{\alpha}A_{x}^{0}u_{j+1}^{n}\Big)\\
&+i\varepsilon\Big(A_{t}^{\alpha}A_{x}^{0}u_{j}^{n}\dot{\chi}_{j}^{n}+A_{t}^{\alpha}A_{x}^{0}u_{j+1}^{n}\dot{\chi}_{j+1}^{n}\Big).
\end{split}
\end{equation}

The following discussions are all based on the full-discretized stochastic conformal multi-symplectic method (\ref{sn}).
\subsection{Equivalent form}
In this subsection, we consider the relation between stochastic multi-symplectic and stochastic conformal multi-symplectic.

Consider the damped stochastic NLS equation (\ref{NLS}). Let $\varpi=e^{\alpha t}u(t,x)$, then It\^{o} formula leads to
\begin{equation}\label{tran}
\begin{split}
d\varpi&=\alpha e^{\alpha t}udt+e^{\alpha t}du\\
&=(i\varpi_{xx}+ie^{-2\alpha t}|\varpi|^{2}\varpi)dt+i\varepsilon \varpi\circ dW.
\end{split}
\end{equation}
Note that equation \eqref{tran} is a nonlinear Schr\"odinger equation with varying coefficient, and we refer readers to
\cite{RefHL, RefHLMZ} for the studies of multi-symplectic schemes of Schr\"odinger equation with varying coefficients in the absence of noise.

Set $\varpi(t,x)=r(t,x)+is(t,x)$, where $r$ and $s$ are real-valued functions, and introduce two new variables $\xi=r_{x},~\eta=s_{x}$, we can rewrite equation (\ref{tran}) as
\begin{equation}\label{shs}
\begin{split}
dr&=-\Big(\eta_{x}+e^{-2\alpha t}(r^{2}+s^{2})s\Big)dt-\varepsilon s\circ dW(t),\\
ds&=\Big(\xi_{x}+e^{-2\alpha t}(r^{2}+s^{2})r\Big)dt+\varepsilon r\circ dW(t),\\
\xi&=r_{x},\\
\eta&=s_{x}.
\end{split}
\end{equation}

Let $U=(r,s,\xi,\eta)^{T}$, then equation (\ref{tran}) can be rewritten as the following stochastic Hamiltonian PDEs
\begin{equation}
\tilde{M}U_{t}+\tilde{K}U_{x}=\nabla S_{1}(t,U)+\nabla S_{2}(U)\circ\dot{\chi},
\end{equation}
where
\begin{equation*}
\tilde{M}=\left(
\begin{array}{cccc}
0&1&0&0\\
-1&0&0&0\\
0&0&0&0\\
0&0&0&0
\end{array}\right),~\tilde{K}=\left(
\begin{array}{cccc}
0&0&-1&0\\
0&0&0&-1\\
1&0&0&0\\
0&1&0&0
\end{array}\right),
\end{equation*}
and
\begin{equation*}
S_{1}(t,U)=\frac{1}{2}(\xi^{2}+\eta^{2})+\frac{1}{4}e^{-2\alpha t}(r^{2}+s^{2})^{2},~S_{2}(U)=\frac{\varepsilon}{2}(r^{2}+s^{2}).
\end{equation*}

We consider the following full-discrete scheme
\begin{equation}\label{dis}
\tilde{M}\delta_{t}^{0}U_{j+\frac{1}{2}}^{n}+\tilde{K}\delta_{x}^{0}U_{j}^{n+\frac{1}{2}}=\nabla S_{1}(t_{n+\theta},U_{j+\frac{1}{2}}^{n+\frac{1}{2}})+\nabla S_{2}(U_{j+\frac{1}{2}}^{n+\frac{1}{2}})\dot{\chi}_{j}^{n},
\end{equation}
where $t_{n+\theta}=t_{n}+\theta\Delta t$ with $\theta\in[0,\;1]$
and
\begin{equation*}
\begin{split}
U_{j}^{n+\frac{1}{2}}&=\frac{1}{2}(U_{j}^{n+1}+U_{j}^{n}),~U_{j+\frac{1}{2}}^{n}=\frac{1}{2}(U_{j+1}^{n}+U_{j}^{n}),\\
U_{j+\frac{1}{2}}^{n+\frac{1}{2}}&=\frac{1}{4}(U_{j}^{n+1}+U_{j}^{n+1}+U_{j+1}^{n}+U_{j}^{n}).
\end{split}
\end{equation*}

In the absence of noise, and $\theta=\frac12$, scheme \eqref{dis} is the central box scheme proposed in \cite{RefHL, RefHLMZ} to discretize Schr\"odinger
equation with variable coefficents by multi-symplectic method. In the stochastic context,
similar as in \cite{RefJiang}, one may show that the discretization (\ref{dis}) is a stochastic multi-symplectic method. Moreover, we will see that the stochastic conformal multi-symplectic method (\ref{sn}) is equivalent to this stochastic multi-symplectic method in the case of $\theta=1$.

Equations (\ref{dis}) can be rewritten as
\begin{equation}
\begin{split}
-\delta_{t}^{0}r_{j+\frac{1}{2}}^{n}-\delta_{x}^{0}\eta_{j}^{n+\frac{1}{2}}&=e^{-2\alpha t_{n+\theta}}\Big((r_{j+\frac{1}{2}}^{n+\frac{1}{2}})^{2}+(s_{j+\frac{1}{2}}^{n+\frac{1}{2}})^{2}\Big)s_{j+\frac{1}{2}}^{n+\frac{1}{2}}+\varepsilon s_{j+\frac{1}{2}}^{n+\frac{1}{2}}\dot{\chi}_{j}^{n},\\[1mm]
\delta_{t}^{0}s_{j+\frac{1}{2}}^{n}-\delta_{x}^{0}\xi_{j}^{n+\frac{1}{2}}&=e^{-2\alpha t_{n+\theta}}\Big((r_{j+\frac{1}{2}}^{n+\frac{1}{2}})^{2}+(s_{j+\frac{1}{2}}^{n+\frac{1}{2}})^{2}\Big)r_{j+\frac{1}{2}}^{n+\frac{1}{2}}+\varepsilon r_{j+\frac{1}{2}}^{n+\frac{1}{2}}\dot{\chi}_{j}^{n},\\[1mm]
\delta_{x}^{0}r_{j}^{n+\frac{1}{2}}&=\xi_{j+\frac{1}{2}}^{n+\frac{1}{2}},\\[1mm]
\delta_{x}^{0}s_{j}^{n+\frac{1}{2}}&=\eta_{j+\frac{1}{2}}^{n+\frac{1}{2}}.
\end{split}
\end{equation}

Recalling that $\varpi=r+is$, and eliminating the additionally introduced variables $\xi$ and
$\eta$, we get the equation of $\varpi$:
\begin{equation}\label{sms}
\begin{split}
\Big(\delta_{t}^{0}\varpi_{j+\frac{1}{2}}^{n}+\delta_{t}^{0}\varpi_{j-\frac{1}{2}}^{n}\Big)&-2i(\delta_{x}^{0})^{2}\varpi_{j-1}^{n+\frac{1}{2}}\\
&=ie^{-2\alpha t_{n+\theta}}\Big(|\varpi_{j+\frac{1}{2}}^{n+\frac{1}{2}}|^{2}\varpi_{j+\frac{1}{2}}^{n+\frac{1}{2}}+|\varpi_{j-\frac{1}{2}}^{n+\frac{1}{2}}|^{2}\varpi_{j-\frac{1}{2}}^{n+\frac{1}{2}}\Big)\\
&+i\varepsilon\Big(\varpi_{j+\frac{1}{2}}^{n+\frac{1}{2}}\dot{\chi}_{j}^{n}+\varpi_{j-\frac{1}{2}}^{n+\frac{1}{2}}\dot{\chi}_{j-1}^{n}\Big).
\end{split}
\end{equation}

Then, we have the following result.
\begin{theorem}
The stochastic conformal multi-symplectic method (\ref{sn}) is equivalent to the stochastic multi-symplectic method (\ref{sms}) with $\theta=1$.
\end{theorem}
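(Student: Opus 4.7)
The strategy is to mimic at the discrete level the continuous change of variables $\varpi(t,x) = e^{\alpha t}u(t,x)$ that produced the variable-coefficient formulation \eqref{tran}. Concretely, I set $\varpi_j^n := e^{\alpha t_n}u_j^n$, and I will show that after a one-index spatial relabeling, the scheme \eqref{sms} with $\theta=1$ becomes precisely a common nonzero multiple of \eqref{sn}, so the two schemes determine the same sequence $\{u_j^n\}$ through the invertible rescaling.

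First I would use the NLS specialization $a=2\alpha$, $b=0$, so that the damped operators read $\delta_t^\alpha z_j^n = (z_j^{n+1}-e^{-\alpha\Delta t}z_j^n)/\Delta t$ and $A_t^\alpha z_j^n = (z_j^{n+1}+e^{-\alpha\Delta t}z_j^n)/2$, while $\delta_x^0, A_x^0$ remain the standard forward difference and average. Using the identity $e^{\alpha t_{n+1}} = e^{\alpha t_n}e^{\alpha\Delta t}$ I would record the basic substitution formulas
\[
\varpi_{j+1/2}^n = e^{\alpha t_n}A_x^0 u_j^n,\quad \varpi_j^{n+1/2} = e^{\alpha t_{n+1}}A_t^\alpha u_j^n,\quad \varpi_{j+1/2}^{n+1/2} = e^{\alpha t_{n+1}}A_t^\alpha A_x^0 u_j^n,
\]
which in turn yield $\delta_t^0\varpi_{k+1/2}^n = e^{\alpha t_{n+1}}\delta_t^\alpha A_x^0 u_k^n$ and $(\delta_x^0)^2\varpi_k^{n+1/2} = e^{\alpha t_{n+1}}(\delta_x^0)^2 A_t^\alpha u_k^n$.

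Next I would shift the spatial index in \eqref{sms} by $j\to j+1$ so that its three-point stencil $\{j-1,j,j+1\}$ aligns with the stencil $\{j,j+1,j+2\}$ implicit in \eqref{sn}. Substituting the identities above into the LHS of the shifted \eqref{sms} produces exactly $e^{\alpha t_{n+1}}$ times the LHS of \eqref{sn}. On the RHS, the stochastic term picks up the common factor $e^{\alpha t_{n+1}}$, while the cubic term picks up $e^{-2\alpha t_{n+\theta}}\cdot(e^{\alpha t_{n+1}})^3 = e^{(3t_{n+1}-2t_{n+\theta})\alpha}$; this equals the required factor $e^{\alpha t_{n+1}}$ if and only if $t_{n+\theta}=t_{n+1}$, i.e., $\theta=1$. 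Dividing the shifted \eqref{sms} through by the common $e^{\alpha t_{n+1}}$ then reproduces \eqref{sn} term by term.

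The main obstacle is simply the bookkeeping: the choice $\theta=1$ is forced by the requirement that the damping factor $e^{-2\alpha t_{n+\theta}}$ in \eqref{sms} absorb the three accumulated $e^{\alpha t_{n+1}}$ factors contributed by the cube $|\varpi|^2\varpi$, and the spatial shift must simultaneously align the two stencils. For any other value of $\theta$ the exponentials in the cubic term fail to match, which explains why the equivalence is specific to $\theta=1$.
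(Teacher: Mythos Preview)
Your proposal is correct and follows essentially the same approach as the paper: define the discrete change of variables $\varpi_j^n=e^{\alpha t_n}u_j^n$, verify the identities $\varpi_{j+1/2}^n=e^{\alpha t_n}A_x^0u_j^n$, $\varpi_j^{n+1/2}=e^{\alpha t_{n+1}}A_t^\alpha u_j^n$, $\delta_t^0\varpi_j^n=e^{\alpha t_{n+1}}\delta_t^\alpha u_j^n$, substitute into \eqref{sms}, and cancel the common factor $e^{\alpha t_{n+1}}$. Your version is in fact a bit more explicit than the paper's in two respects---you spell out the spatial index shift needed to align the stencils, and you explain \emph{why} $\theta=1$ is forced by the exponent count in the cubic term---but the underlying argument is identical.
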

\begin{proof}
It follows from the transform $\varpi=e^{\alpha t}u$, we set
\begin{equation}\label{trans}
\begin{split}
&\varpi_{j}^{n}=e^{\alpha t_{n}}u_{j}^{n},\\
&\varpi_{j+\frac{1}{2}}^{n}=e^{\alpha t_{n}}A_{x}^{0}u_{j}^{n},\\
&\varpi_{j}^{n+\frac{1}{2}}=\frac{1}{2}\Big(e^{\alpha t_{n+1}}u_{j}^{n+1}+e^{\alpha t_{n}}u_{j}^{n}\Big)=e^{\alpha t_{n+1}}A_{t}^{\alpha}u_{j}^{n}.
\end{split}
\end{equation}

Substitute (\ref{trans}) into equation (\ref{sms}), we can obtain
\begin{equation}\label{equs}
\begin{split}
\delta_{t}^{0}&\Big(e^{\alpha t_{n}}u_{j+\frac{1}{2}}^{n}+e^{\alpha t_{n}}u_{j-\frac{1}{2}}^{n}\Big)-2i(\delta_{x}^{0})^{2}e^{\alpha t_{n+1}}A_{t}^{\alpha}u_{j-1}^{n}\\
&=ie^{-2\alpha t_{n+\theta}}\Big[\Big|e^{\alpha t_{n+1}}A_{x}^{0}A_{t}^{\alpha}u_{j}^{n}\Big|^{2}e^{\alpha t_{n+1}}A_{x}^{0}A_{t}^{\alpha}u_{j}^{n}
+\Big|e^{\alpha t_{n+1}}A_{x}^{0}A_{t}^{\alpha}u_{j-1}^{n}\Big|^{2}e^{\alpha t_{n+1}}A_{x}^{0}A_{t}^{\alpha}u_{j-1}^{n}\Big]\\
&\quad+i\varepsilon\Big[e^{\alpha t_{n+1}}A_{x}^{0}A_{t}^{\alpha}u_{j}^{n}\dot{\chi}_{j}^{n}+e^{\alpha t_{n+1}}A_{x}^{0}A_{t}^{\alpha}u_{j-1}^{n}\dot{\chi}_{j-1}^{n}\Big].
\end{split}
\end{equation}

Multiply equation (\ref{equs}) by $e^{-\alpha t_{n+1}}$, and
noting that
\[
\delta_{t}^{0}\varpi_{j}^{n}=\delta_{t}^{0}\big(e^{\alpha t_{n}}u_{j}^{n}\big)=e^{\alpha t_{n+1}}\delta_{t}^{\alpha}u_{j}^{n},
\]
 we can see the equivalence of this two numerical methods if $\theta=1$. Thus the proof is finished.
\end{proof}

\subsection{Properties}
In this subsection, we present two properties of stochastic conformal multi-symplectic method (\ref{sn}) for damped stochastic NLS equation.
\begin{theorem}\label{charge_dis}
The stochastic conformal multi-symplectic
method (\ref{sn}) has the discrete charge exponential dissipation law almost surely,
\begin{equation}
\sum_{j}|u_{j+\frac{1}{2}}^{n+1}|^{2}=e^{-2\alpha \Delta t}\sum_{j}|u_{j+\frac{1}{2}}^{n}|^{2},
\end{equation}
with $u_{j+\frac{1}{2}}^{n}=A_{x}^{0}u_{j}^{n}=\frac{1}{2}(u_{j+1}^{n}+u_{j}^{n})$.
\end{theorem}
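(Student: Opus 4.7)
The plan is to return to the componentwise real form of the scheme displayed just before \eqref{sn}---the four equations for $p, q, v, \omega$---because the cancellations that drive charge conservation in NLS are cleanest there. First I would multiply the first equation by $A_t^\alpha A_x^0 p_j^n$, the second by $A_t^\alpha A_x^0 q_j^n$, and add them. On the right-hand side the cubic terms $\mp|A_t^\alpha A_x^0 u_j^n|^2\,A_t^\alpha A_x^0 q_j^n\,A_t^\alpha A_x^0 p_j^n$ cancel pointwise, and the two noise contributions $\mp\varepsilon\,A_t^\alpha A_x^0 p_j^n\,A_t^\alpha A_x^0 q_j^n\,\dot\chi_j^n$ also cancel pointwise. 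This is the mechanism that makes the identity hold almost surely: the random factor $\dot\chi_j^n$ multiplies a quantity that vanishes pathwise, so no expectation or It\^o correction is needed.

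For the time-difference part of the left-hand side I use the elementary identity
\begin{equation*}
A_t^\alpha z_j^n \cdot \delta_t^\alpha z_j^n = \frac{(z_j^{n+1})^2 - e^{-2\alpha\Delta t}(z_j^n)^2}{2\Delta t},
\end{equation*}
which is immediate from the definitions \eqref{df}--\eqref{av}. Applying it with $z=A_x^0 p$ and $z=A_x^0 q$ and adding recovers
\begin{equation*}
\frac{1}{2\Delta t}\Bigl(|u_{j+1/2}^{n+1}|^2 - e^{-2\alpha\Delta t}\,|u_{j+1/2}^n|^2\Bigr),
\end{equation*}
which, once summed in $j$, is precisely the quantity appearing in the claim.

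The main obstacle is the spatial contribution $A_t^\alpha A_x^0 p_j^n \cdot \delta_x^0 A_t^\alpha \omega_j^n - A_t^\alpha A_x^0 q_j^n \cdot \delta_x^0 A_t^\alpha v_j^n$, which must sum to zero over $j$. Writing $P=A_t^\alpha p$, $Q=A_t^\alpha q$, $V=A_t^\alpha v$, $\Omega=A_t^\alpha \omega$ and applying the discrete product rule of Lemma \ref{lemma2} with exponent zero, namely $A_x^0\phi\cdot\delta_x^0\psi = \delta_x^0(\phi\psi)-\delta_x^0\phi\cdot A_x^0\psi$, the spatial contribution becomes $\delta_x^0(P\Omega - QV) - A_x^0 V\cdot A_x^0 \Omega + A_x^0 \Omega\cdot A_x^0 V$. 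The two constraint equations $\delta_x^0 A_t^\alpha p_j^n = A_t^\alpha A_x^0 v_j^n$ and $\delta_x^0 A_t^\alpha q_j^n = A_t^\alpha A_x^0 \omega_j^n$ of the componentwise scheme are exactly what force the two extra bilinear terms $\pm A_x^0 V\cdot A_x^0 \Omega$ to coincide, leaving a pure discrete $x$-derivative. Summing over $j$ under standard periodic boundary conditions, this telescopes to zero, and the claimed discrete charge exponential dissipation law follows pathwise.
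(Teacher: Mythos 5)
Your proof is correct, but it follows a genuinely different route from the paper's. The paper works directly with the eliminated complex scheme \eqref{sn}: it multiplies by $A_{t}^{\alpha}\bar{u}_{j+1}^{n}$, sums over $j$, takes the real part, and then verifies by expanding and reindexing that the discrete-Laplacian, cubic, and noise contributions all have vanishing real part, while the time-difference term produces $\frac{1}{\Delta t}\big(\sum_{j}|u_{j+\frac12}^{n+1}|^{2}-e^{-2\alpha\Delta t}\sum_{j}|u_{j+\frac12}^{n}|^{2}\big)$. You instead return to the four-component first-order system, pair the first two equations with $(A_{t}^{\alpha}A_{x}^{0}p_{j}^{n},A_{t}^{\alpha}A_{x}^{0}q_{j}^{n})$, and obtain the cancellation of the cubic and noise terms pointwise rather than after summation; the spatial term is handled by the discrete product rule of Lemma \ref{lemma2} together with the constraint equations $\delta_{x}^{0}A_{t}^{\alpha}p=A_{t}^{\alpha}A_{x}^{0}v$, $\delta_{x}^{0}A_{t}^{\alpha}q=A_{t}^{\alpha}A_{x}^{0}\omega$, leaving a pure discrete $x$-derivative that telescopes; and the time term follows from the clean identity $A_{t}^{\alpha}z\cdot\delta_{t}^{\alpha}z=\frac{(z^{n+1})^{2}-e^{-2\alpha\Delta t}(z^{n})^{2}}{2\Delta t}$. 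All of these steps check out. Your version is arguably more transparent: it exposes exactly where the multi-symplectic structure (the auxiliary constraints) enters, and it is the discrete analogue of the continuous energy argument. What the paper's route buys is that it operates on the scheme \eqref{sn} as actually implemented; your argument establishes the identity for solutions of the componentwise system, and strictly speaking one should add a remark that \eqref{sn} is obtained from that system by applying $2A_{x}^{0}$ to its complex combination, so that the two formulations are being treated as the same method (as the paper itself does). Also note that both your telescoping step and the paper's summation-by-parts implicitly require periodic or homogeneous boundary data; stating this, as you do, is if anything more careful than the original.
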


\begin{proof}
Multiply equation (\ref{sn}) by $A_{t}^{\alpha}\bar{u}_{j+1}^{n}$, i.e., the conjugate of $A_{t}^{\alpha}u_{j+1}^{n}$, sum over all
spatial grid points $j$, and take the real part. Then, the first term of left-side becomes
\begin{equation*}
\begin{split}
Re&\Big(\sum_{j}(\delta_{t}^{\alpha}A_{x}^{0}u_{j}^{n}+\delta_{t}^{\alpha}A_{x}^{0}u_{j+1}^{n})A_{t}^{\alpha}\bar{u}_{j+1}^{n}\Big)\\
&=\frac{1}{4\Delta t}Re\sum_{j}(u_{j+1}^{n+1}-e^{-\alpha \Delta t}u_{j+1}^{n}+u_{j}^{n+1}-e^{-\alpha \Delta t}u_{j}^{n})(\bar{u}_{j+1}^{n+1}+e^{-\alpha \Delta t}\bar{u}_{j+1}^{n})\\
&+\frac{1}{4\Delta t}Re\sum_{j}(u_{j+2}^{n+1}-e^{-\alpha \Delta t}u_{j+2}^{n}+u_{j+1}^{n+1}-e^{-\alpha \Delta t}u_{j+1}^{n})(\bar{u}_{j+1}^{n+1}+e^{-\alpha \Delta t}\bar{u}_{j+1}^{n})\\
&=\frac{1}{\Delta t}\Big(\sum_{j}|u_{j+\frac{1}{2}}^{n+1}|^{2}-e^{-2\alpha\Delta t}\sum_{j}|u_{j+\frac{1}{2}}^{n}|^{2}\Big).
\end{split}
\end{equation*}
The second term of left-side becomes
\begin{equation*}
\begin{split}
&Re\Big(\sum_{j}(-2i(\delta_{x}^{0})^{2}A_{t}^{\alpha}u_{j}^{n})A_{t}^{\alpha}\bar{u}_{j+1}^{n}\Big)\\
&=\frac{1}{2\Delta x^{2}}Im\sum_{j}(u_{j+2}^{n+1}-2u_{j+1}^{n+1}+u_{j}^{n+1}+e^{-\alpha \Delta t}(u_{j+2}^{n}-2u_{j+1}^{n}+u_{j}^{n}))(\bar{u}_{j+1}^{n+1}+e^{-\alpha \Delta t}\bar{u}_{j+1}^{n})\\
&=0.
\end{split}
\end{equation*}
Similarly, for the first term of right-side of (\ref{sn}) it holds
\begin{equation*}
\begin{split}
&Re\Big(\sum_{j}i\Big(|A_{t}^{\alpha}A_{x}^{0}u_{j}^{n}|^{2}A_{t}^{\alpha}A_{x}^{0}u_{j}^{n}+|A_{t}^{\alpha}A_{x}^{0}u_{j+1}^{n}|^{2}A_{t}^{\alpha}A_{x}^{0}u_{j+1}^{n}\Big)A_{t}^{\alpha}\bar{u}_{j+1}^{n}\Big)\\
&=\frac{-1}{8}\sum_{j}|A_{t}^{\alpha}A_{x}^{0}u_{j}^{n}|^{2}Im(u_{j+1}^{n+1}+e^{-\alpha \Delta t}u_{j+1}^{n}+u_{j}^{n+1}+e^{-\alpha \Delta t}u_{j}^{n}))(\bar{u}_{j+1}^{n+1}+e^{-\alpha \Delta t}\bar{u}_{j+1}^{n})\\
&-\frac{1}{8}\sum_{j}|A_{t}^{\alpha}A_{x}^{0}u_{j+1}^{n}|^{2}Im(u_{j+2}^{n+1}+e^{-\alpha \Delta t}u_{j+2}^{n}+u_{j+1}^{n+1}+e^{-\alpha \Delta t}u_{j+1}^{n}))(\bar{u}_{j+1}^{n+1}+e^{-\alpha \Delta t}\bar{u}_{j+1}^{n})\\
&=0.
\end{split}
\end{equation*}
In the similar way, the real part of the last term of right-side of (\ref{sn}) vanishes since $\dot{\chi}_{j}^{n}$ is real-valued. Combining all these equalities, we obtain the discrete charge conservation law
\begin{equation*}
\frac{1}{\Delta t}\Big(\sum_{j}|u_{j+\frac{1}{2}}^{n+1}|^{2}-e^{-2\alpha\Delta t}\sum_{j}|u_{j+\frac{1}{2}}^{n}|^{2}\Big)=0.
\end{equation*}
Thus, the proof is finished.
\end{proof}

The result of this theorem is evidently consistent with the continuous version of charge dissipation law
(\ref{charge}), which means that the charge exponential dissipation law can be exactly preserved by the
proposed stochastic conformal multi-symplectic method.

The next result concerns the discrete global energy evolution relationship of the damped stochastic NLS equation.
\begin{theorem}
The stochastic conformal multi-symplectic
method (\ref{sn}) satisfies the following recursion of discrete global energy conservation
law almost surely,
\begin{equation}
\begin{split}
\sum_{j}|\delta_{x}^{0}u_{j}^{n+1}|^{2}&-\sum_{j}|A_{t}^{\alpha}A_{x}^{0}u_{j}^{n}|^{2}|A_{x}^{0}u_{j}^{n+1}|^{2}\\
=&e^{-2\alpha \Delta t}\Big(\sum_{j}|\delta_{x}^{0}u_{j}^{n}|^{2}-\sum_{j}|A_{t}^{\alpha}A_{x}^{0}u_{j}^{n}|^{2}|A_{x}^{0}u_{j}^{n}|^{2}\Big)\\
&+\varepsilon\Big(\sum_{j}|A_{x}^{0}u_{j}^{n+1}|^{2}-e^{-2\alpha \Delta t}\sum_{j}|A_{x}^{0}u_{j}^{n}|^{2}\Big)\dot{\chi}_{j}^{n}.
\end{split}
\end{equation}
\end{theorem}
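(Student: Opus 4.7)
The plan is to adapt the test-function argument of Theorem \ref{charge_dis}, replacing the time-average test function $A_t^{\alpha}\bar{u}_{j+1}^{n}$ by a time-difference test function that extracts the $|\delta_x^0 u|^2$ and $|u|^4$-type energy quantities. Specifically, I would multiply equation \eqref{sn} by $\delta_t^{\alpha}\overline{A_x^{0}u_{j+1}^{n}}+\delta_t^{\alpha}\overline{A_x^{0}u_{j}^{n}}$, sum over all spatial nodes $j$ (assuming periodic boundary data so that summation by parts in $j$ produces no boundary contribution), and take the imaginary part of the resulting identity. Taking the imaginary part kills the quantity $|\delta_t^{\alpha}A_x^{0}u_{j}^{n}+\delta_t^{\alpha}A_x^{0}u_{j+1}^{n}|^2$ coming from the left-hand side of \eqref{sn}, and converts the prefactor $i$ on the dispersive, nonlinear, and noise pieces into real-valued contributions. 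This choice is the discrete analogue of pairing $\bar u_t$ against the equation in the continuous derivation of \eqref{hamil}.

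For the dispersive term, after one discrete integration by parts in $j$ and using Lemmas \ref{lemma1}--\ref{lemma2}, I would reduce $-2i(\delta_x^0)^2 A_t^{\alpha}u_j^n$ paired with the conjugate time-difference test function to a multiple of $\sum_j \delta_t^{\alpha}|\delta_x^0 u_j^n|^2$; unfolding the definition of $\delta_t^{\alpha}$ this equals $\tfrac{1}{\Delta t}\!\big(\sum_j|\delta_x^0 u_j^{n+1}|^2 - e^{-2\alpha\Delta t}\sum_j|\delta_x^0 u_j^n|^2\big)$, which supplies the first bracket of the recurrence. For the cubic nonlinearity, the key algebraic identity is that $\mathrm{Re}\big(|w|^2 w\cdot(\overline{b-e^{-\alpha\Delta t}a})\big)$ with $w=A_t^{\alpha}$ of $a,b$, reassembles into a discrete time-difference of $|w|^2|A_x^0 u|^2$; combined with Lemma \ref{lemma2}, the two nonlinear pieces at $j$ and $j+1$ yield $\sum_j \delta_t^{\alpha}\!\big(|A_t^{\alpha}A_x^{0}u_j^n|^2|A_x^{0}u_j^n|^2\big)$, providing the second bracket. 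The noise term is handled analogously, and since $\dot\chi_j^n$ is real-valued it survives intact as the $\varepsilon\dot\chi_j^n(\cdots)$ contribution. Multiplying through by $\Delta t$ collects the three contributions into the claimed recurrence.

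The main obstacle is the cubic nonlinearity. Unlike in Theorem \ref{charge_dis}, where the nonlinear term dropped out because pairing with $A_t^{\alpha}\bar u$ produced a purely imaginary quantity of the form $i\cdot|\cdot|^{4}$, here the nonlinearity contributes genuinely and the weighted average $A_t^{\alpha}$ mixes the $n$ and $n+1$ levels with the factor $e^{-\alpha\Delta t}$. The delicate step is checking that the cross-terms in the expansion assemble into the symmetric combination $|A_x^0 u_j^{n+1}|^2 - e^{-2\alpha\Delta t}|A_x^0 u_j^n|^2$ rather than some lopsided expression; this rests on applying the re-factorisation $v^{n+1}+e^{-\alpha\Delta t}v^n=2A_t^{\alpha}v^n$ consistently, together with careful use of the commutation and discrete product-rule identities of Lemmas \ref{lemma1} and \ref{lemma2}, so that the exponential weights produced by $\delta_t^{\alpha}$ match those dictated by the stated identity.
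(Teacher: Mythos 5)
Your overall strategy---test the scheme against a discrete time derivative, sum over $j$, take the imaginary part, and use Lemmas \ref{lemma1}--\ref{lemma2} to telescope each term into an exponentially weighted difference between the levels $n+1$ and $n$---is exactly the strategy of the paper's proof. The one concrete defect is your choice of multiplier. The paper multiplies \eqref{sn} by the point value $\delta_t^{\alpha}\bar u_{j+1}^{n}$, i.e.\ the conjugate discrete time derivative at the centre $j+1$ of the stencil, whereas you propose $\delta_t^{\alpha}\overline{A_x^{0}u_{j+1}^{n}}+\delta_t^{\alpha}\overline{A_x^{0}u_{j}^{n}}=\delta_t^{\alpha}\bar u_{j+1}^{n}+\tfrac{\Delta x^{2}}{2}\,\delta_t^{\alpha}(\delta_x^{0})^{2}\bar u_{j}^{n}$. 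The extra second-difference piece is not innocuous, and with it the computation does not close to the stated identity.

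Concretely: pairing the dispersive term $(\delta_x^{0})^{2}A_t^{\alpha}u_{j}^{n}$ (whose stencil is centred at $j+1$) with $\delta_t^{\alpha}\bar u_{j+1}^{n}$ and summing by parts yields $\sum_j \mathrm{Re}\big(A_t^{\alpha}(\delta_x^{0}u_j^n)\,\overline{\delta_t^{\alpha}(\delta_x^{0}u_j^n)}\big)$, which Lemma \ref{lemma2} converts into $\tfrac12\,\delta_t^{2\alpha}\sum_j|\delta_x^{0}u_j^n|^{2}$, i.e.\ the first bracket of the recurrence; with your multiplier you additionally pick up off-diagonal correlations of the type $\delta_t^{2\alpha}\sum_j\mathrm{Re}\big(\delta_x^{0}u_{j+1}^{n}\,\overline{\delta_x^{0}u_{j}^{n}}\big)$, which is a time difference of a different functional, not of $\sum_j|\delta_x^{0}u_j^n|^{2}$. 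For the cubic term the mismatch is worse: since \eqref{sn} already carries the nonlinearity at both $j$ and $j+1$, the point multiplier automatically produces, after an index shift, the averaged pairing $2\sum_j g_j\,\overline{A_x^{0}\delta_t^{\alpha}u_j^{n}}$ with $g_j=|A_t^{\alpha}A_x^{0}u_j^{n}|^{2}A_t^{\alpha}A_x^{0}u_j^{n}$, which Lemma \ref{lemma2} telescopes into $\delta_t^{2\alpha}\big(|A_t^{\alpha}A_x^{0}u_j^{n}|^{2}|A_x^{0}u_j^{n}|^{2}\big)$; your multiplier adds cross terms such as $|A_t^{\alpha}A_x^{0}u_j^{n}|^{2}\,\mathrm{Re}\big(A_t^{\alpha}A_x^{0}u_j^{n}\,\overline{\delta_t^{\alpha}A_x^{0}u_{j+1}^{n}}\big)$, whose modulus prefactors at $j$ and $j+1$ do not match, so they cannot be reassembled into the second bracket. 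If you replace your multiplier by $\delta_t^{\alpha}\bar u_{j+1}^{n}$ (noting that the vanishing of the first left-hand term then follows from the index-shift symmetry of $\sum_j(w_j+2w_{j+1}+w_{j+2})\bar w_{j+1}$ rather than from it being a literal square modulus), the rest of your outline---imaginary part, summation by parts, product rule, and the real-valued $\dot\chi_j^n$ surviving in the noise term---reproduces the paper's proof.
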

\begin{proof}
Multiplying equation (\ref{sn}) by $\delta_{t}^{\alpha}\bar{u}_{j+1}^{n}$, summing up for $j$ over the spatial
domain, and taking the imaginary part, we obtain the results as follows.

The first term of left-side reads
\begin{equation}
\begin{split}
&Im\Big(\sum_{j}(\delta_{t}^{\alpha}A_{x}^{0}u_{j}^{n}+\delta_{t}^{\alpha}A_{x}^{0}u_{j+1}^{n})\delta_{t}^{\alpha}\bar{u}_{j+1}^{n}\Big)\\
&=\frac{1}{2\Delta t^{2}}Im\sum_{j}(u_{j+1}^{n+1}-e^{-\alpha \Delta t}u_{j+1}^{n}+u_{j}^{n+1}-e^{-\alpha \Delta t}u_{j}^{n})(\bar{u}_{j+1}^{n+1}-e^{-\alpha \Delta t}\bar{u}_{j+1}^{n})\\
&+\frac{1}{2\Delta t^{2}}Im\sum_{j}(u_{j+2}^{n+1}-e^{-\alpha \Delta t}u_{j+2}^{n}+u_{j+1}^{n+1}-e^{-\alpha \Delta t}u_{j+1}^{n})(\bar{u}_{j+1}^{n+1}-e^{-\alpha \Delta t}\bar{u}_{j+1}^{n})\\
&=0.
\end{split}
\end{equation}
Similarly, for the second term of left-side of (\ref{sn}) it holds
\begin{equation}
\begin{split}
&Im\Big(\sum_{j}(-2i(\delta_{x}^{0})^{2}A_{t}^{\alpha}u_{j}^{n})\delta_{t}^{\alpha}\bar{u}_{j+1}^{n}\Big)\\
&=\frac{-1}{\Delta x^{2}\Delta t}Re\sum_{j}(u_{j+2}^{n+1}-2u_{j+1}^{n+1}+u_{j}^{n+1}+e^{-\alpha \Delta t}(u_{j+2}^{n}-2u_{j+1}^{n}+u_{j}^{n}))(\bar{u}_{j+1}^{n+1}-e^{-\alpha \Delta t}\bar{u}_{j+1}^{n})\\
&=\frac{1}{\Delta x^{2}\Delta t}\sum_{j}\Big(|u_{j+1}^{n+1}-u_{j}^{n+1}|^{2}-e^{-2\alpha \Delta t}|u_{j+1}^{n}-u_{j}^{n}|^{2}\Big).
\end{split}
\end{equation}
For the first term of right-side of (\ref{sn}), we have
\begin{equation}
\begin{split}
&Im\Big(\sum_{j}i\Big(|A_{t}^{\alpha}A_{x}^{0}u_{j}^{n}|^{2}A_{t}^{\alpha}A_{x}^{0}u_{j}^{n}+|A_{t}^{\alpha}A_{x}^{0}u_{j+1}^{n}|^{2}A_{t}^{\alpha}A_{x}^{0}u_{j+1}^{n}\Big)\delta_{t}^{\alpha}\bar{u}_{j+1}^{n}\Big)\\
&=\frac{1}{4\Delta t}\sum_{j}|A_{t}^{\alpha}A_{x}^{0}u_{j}^{n}|^{2}Re(u_{j+1}^{n+1}+e^{-\alpha \Delta t}u_{j+1}^{n}+u_{j}^{n+1}+e^{-\alpha \Delta t}u_{j}^{n}))(\bar{u}_{j+1}^{n+1}-e^{-\alpha \Delta t}\bar{u}_{j+1}^{n})\\
&+\frac{1}{4\Delta t}\sum_{j}|A_{t}^{\alpha}A_{x}^{0}u_{j+1}^{n}|^{2}Re(u_{j+2}^{n+1}+e^{-\alpha \Delta t}u_{j+2}^{n}+u_{j+1}^{n+1}+e^{-\alpha \Delta t}u_{j+1}^{n}))(\bar{u}_{j+1}^{n+1}-e^{-\alpha \Delta t}\bar{u}_{j+1}^{n})\\
&=\frac{1}{\Delta t}\sum_{j}|A_{t}^{\alpha}A_{x}^{0}u_{j}^{n}|^{2}\Big(|A_{x}^{0}u_{j}^{n+1}|^{2}-e^{-2\alpha \Delta t}|A_{x}^{0}u_{j}^{n}|^{2}\Big).
\end{split}
\end{equation}
For the last term of right-hand of (\ref{sn}), it leads
\begin{equation}
\begin{split}
Im\Big(\sum_{j}i\varepsilon&\Big(A_{t}^{\alpha}A_{x}^{0}u_{j}^{n}\circ\dot{\chi}_{j}^{n}+A_{t}^{\alpha}A_{x}^{0}u_{j+1}^{n}\circ\dot{\chi}_{j+1}^{n}\Big)\delta_{t}^{\alpha}\bar{u}_{j+1}^{n}\Big)\\
&=\frac{\varepsilon}{\Delta t}\Big(|A_{x}^{0}u_{j}^{n+1}|^{2}-e^{-2\alpha\Delta t}|A_{x}^{0}u_{j}^{n}|^{2}\Big)\dot{\chi}_{j}^{n}.
\end{split}
\end{equation}
Combining all these equations, finally, we get
\begin{equation}
\begin{split}
\frac{1}{\Delta t}\sum_{j}|\delta_{x}^{0}u_{j}^{n+1}|^{2}&-\frac{1}{\Delta t}\sum_{j}|A_{t}^{\alpha}A_{x}^{0}u_{j}^{n}|^{2}|A_{x}^{0}u_{j}^{n+1}|^{2}\\
=&\frac{1}{\Delta t}e^{-2\alpha \Delta t}\Big(\sum_{j}|\delta_{x}^{0}u_{j}^{n}|^{2}-\sum_{j}|A_{t}^{\alpha}A_{x}u_{j}^{n}|^{2}|A_{x}^{0}u_{j}^{n}|^{2}\Big)\\
&+\frac{\varepsilon}{\Delta t}\Big(\sum_{j}|A_{x}^{0}u_{j}^{n+1}|^{2}-e^{-2\alpha \Delta t}\sum_{j}|A_{x}^{0}u_{j}^{n}|^{2}\Big)\dot{\chi}_{j}^{n}.
\end{split}
\end{equation}
Thus, the proof is finished.
\end{proof}

\section{Numerical experiments}
In this section we provide three numerical examples to illustrate the accuracy and capability of the method developed in the previous sections. We investigate the good performance of the stochastic conformal multi-symplectic method, compared with  a Crank-Nicolson type method which is non conformal multi-symplectic. Furthermore, we check the temporal accuracy by
fixing the space step sufficiently small such that errors stemming
from the spatial approximation are negligible.

 In the following, we take the spatial domain as $x\in[x_{L},\;x_{R}]$ and boundary conditions as
 \begin{equation}\label{bc}
 u(x_L,t)=u(x_R,t)=0,
 \end{equation}
 and use the mesh
 \begin{align*}
& x_{j}=x_{L}+j\Delta x,\,j=1,2,\cdots,J:=\lfloor\frac{x_{R}-x_{L}}{\Delta x}\rfloor,\\
& t_{n}=n\Delta t,\, n=1,2,\cdots
 \end{align*}
 for our numerical computations.

 In each sub-interval $[t_n,\;t_{n+1}]$, under the initial condition $u_0(x)$ and boundary condition \eqref{bc}, we write \eqref{sn} as the form
 \[
 A(n)U^{n+1}=B(n)U^{n}+F(t_n,t_{n+1},U^n,U^{n+1},\Delta W^{n+1}),\,n=1,2,\cdots,
 \]
 where $A(n)$, $B(n)$ are invertible tridiagonal matrices depending on coefficients of the  equation, the vector
 $U^n=(u_{1}^{n},u_2^n,\cdots,u_J^n)^{T}$ and $F$ denotes the discretization of nonlinear and noise terms.

\vspace{3mm}
\noindent {\it Example 1.} Consider the damped stochastic NLS equation \eqref{NLS} with
 $W(t)$ being a standard Brownian motion, and in this case its
 exact plane wave solution is given by
\begin{equation}\label{pws}
  u(t)=Ae^{-\alpha t}e^{i\left(\frac{1}{2\alpha}|A|^2-\frac{e^{-2\alpha t}}{2\alpha}|A|^2+\varepsilon W(t)\right)}.
\end{equation}
We compare the proposed
stochastic conformal multi-symplectic method \eqref{method} with the exact relationship.
Let the spatial domain $[x_{L},\, x_{R}]$ be $[0,2\pi]$, $A=0.5$, $\alpha=0.1$ and $\varepsilon=\sqrt{2}$. Fig. \ref{amp} plots the exact amplitude
with the numerical values, the left one is exact and numerical values for amplitude averaged
over 1000 trajectories at time $T=5$; while the right one is the error in amplitudes of stochastic conformal multi-symplectic method  compared with exact solution, which is computed by the spatial averages. We observe that the error in amplitudes is of $10^{-14}$ scale, thus the proposed stochastic conformal multi-symplectic method preserves the amplitude well.
\begin{figure}[th!]
\begin{center}
  \includegraphics[height=4.1cm,width=5.5cm]{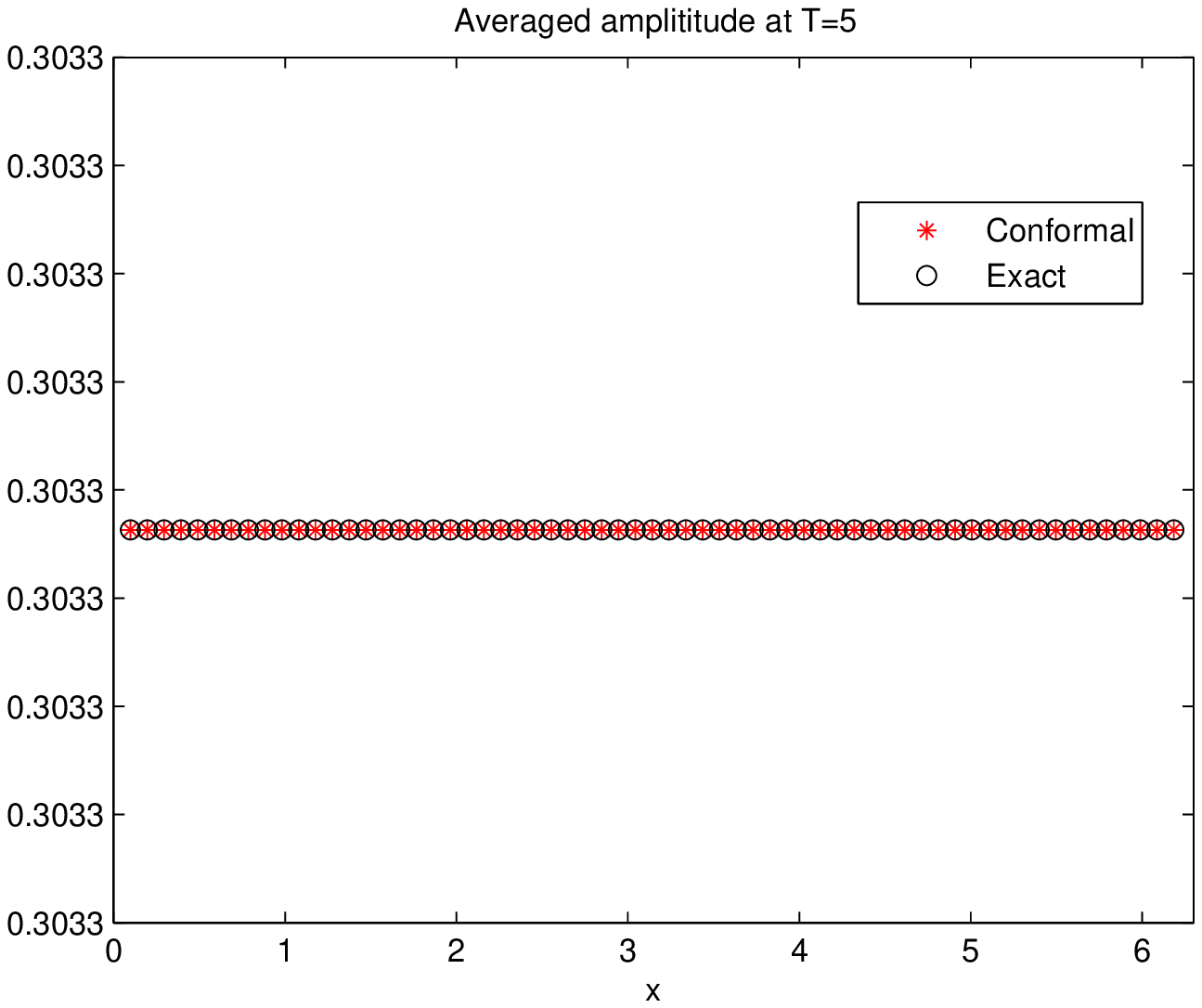}
  \includegraphics[height=4.1cm,width=5.5cm]{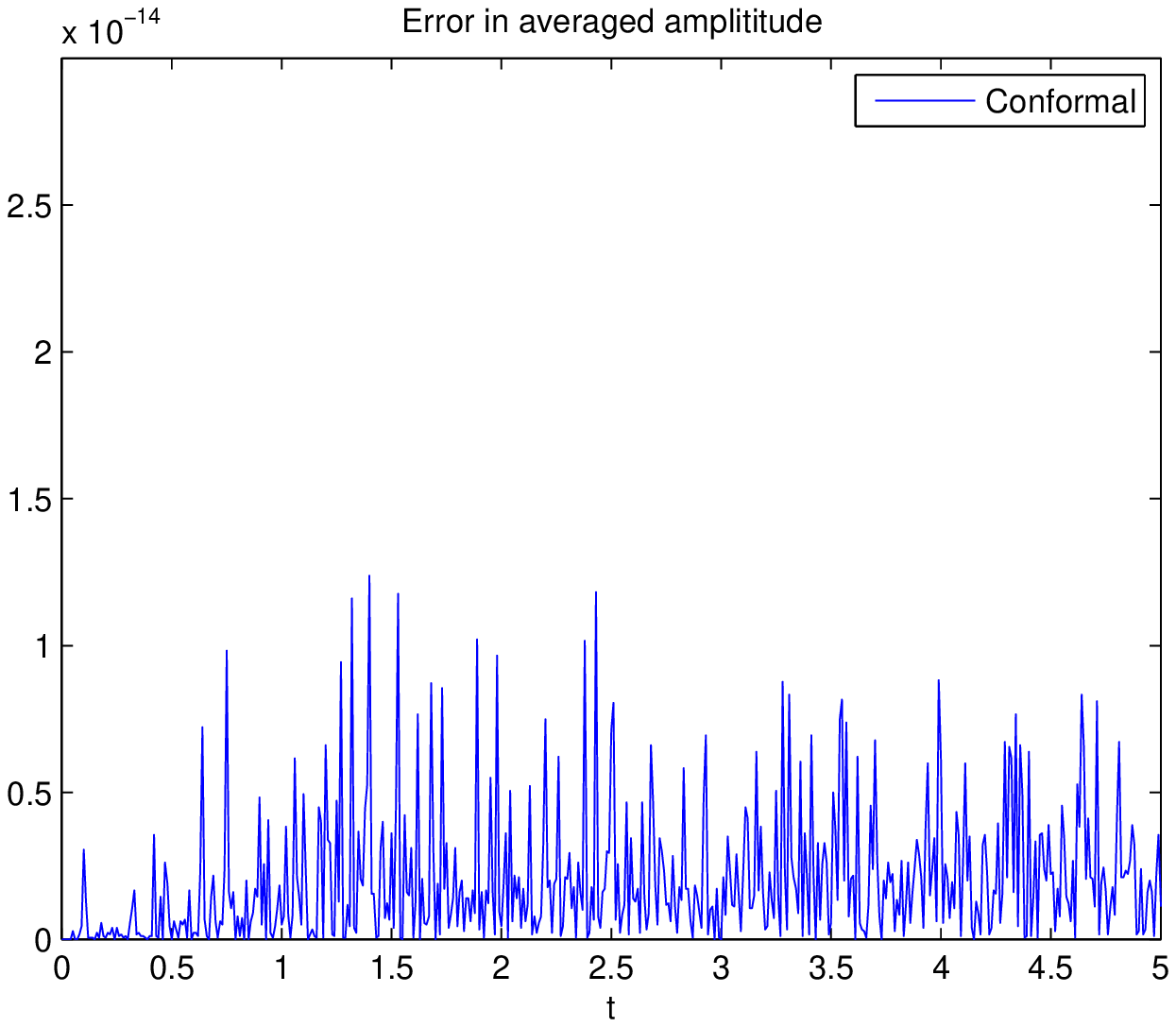}
  \caption{Averaged amplitude of the conformal multi-symplectic method for the plane wave solution \eqref{pws} with $T=5$, $\Delta t=0.01$ over 1000 paths.}\label{amp}
  \end{center}
\end{figure}

Fig. \ref{phase} plots the exact averaged phase
with the numerical values.
The left one is exact and numerical values for phase averaged
over $10^{4}$ trajectories at time $T=5$; while the right one is the error in phase of conformal multi-symplectic method
compared with exact solution, which is computed by the spatial averages. We observe that the error in phase is significant, which means that  the proposed stochastic conformal multi-symplectic method may alter the wave speeds.
\begin{figure}[th!]
\begin{center}
  \includegraphics[height=4.1cm,width=5.5cm]{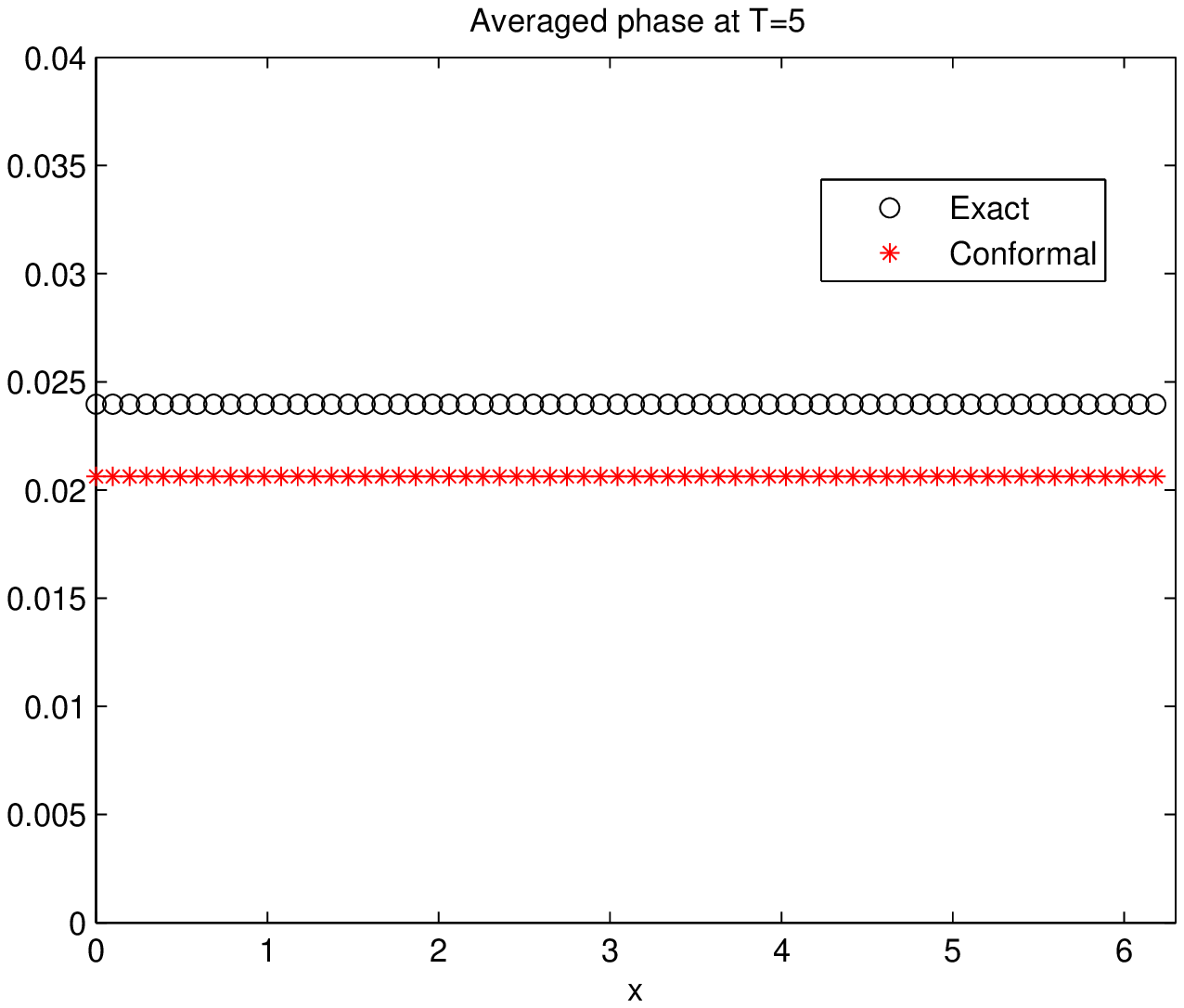}
  \includegraphics[height=4.1cm,width=5.5cm]{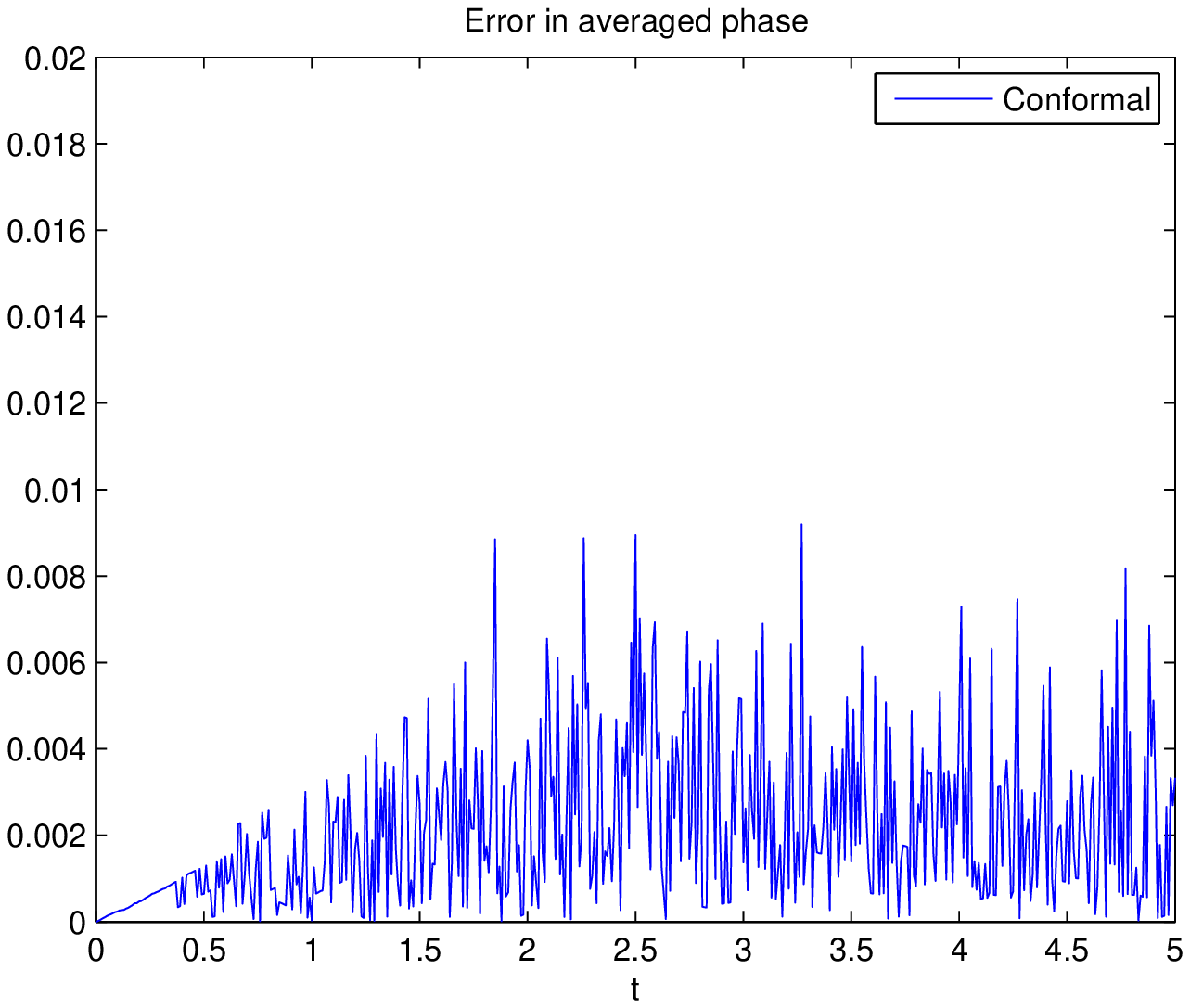}
  \caption{Averaged phase of the conformal multi-symplectic method for the plane wave solution \eqref{pws} with $T=5$, $\Delta t=0.01$ over $10^{4}$ paths.}\label{phase}
  \end{center}
\end{figure}

\vspace{3mm}
\noindent{\it Example 2.}
The next example is performed for the case that the noise depends on not only time $t$ but also on space variable $x$, whose exact solution is difficult to derived.
Here, the spatial domain $[x_{L},x_{R}]$ is $[-25,25]$, $\varepsilon=0.5$, and the initial value is given by $u|_{t=0}={\rm sech}(x)$. For each numerical experiment in this example, we take the spatial meshgrid-size $\Delta x=0.1$, and the longest time interval $[0,10]$. Furthermore, we take $\eta_{m}=1$ and the orthnormal basis $e_{m}(x)=\sqrt{\frac{2}{X_{R}-X_{L}}}\sin\Big(\frac{m\pi (x-X_{L})}{X_{R}-X_{L}}\Big)$ in equation \eqref{WW}. And we truncate the infinite series of real-valued Wiener process \eqref{WW} till $M=8$.


To compare the stochastic conformal multi-symplectic method in terms of solution behavior, we construct the following Crank-Nicolson type numerical scheme:
\begin{equation}\label{non_confor}
\delta_{t}^{0}u_{j}^{n}+\alpha u_{j}^{n+\frac{1}{2}}-i\delta_{x}^{0}\delta_{x}^{0}u_{j-1}^{n+\frac{1}{2}}-iu_{j}^{n+\frac{1}{2}}A_{t}^{0}|u_{j}^{n}|^{2}=i\varepsilon u_{j}^{n+\frac{1}{2}}\dot{\chi}_{j}^{n}.
\end{equation}
It is obvious that this method is neither stochastic conformal multi-symplectic nor multi-symplectic.
By multiplying \eqref{non_confor} with $\bar{u}_{j}^{n+\frac{1}{2}}$ which is the conjugate of $u_{j}^{n+\frac{1}{2}}$, taking the real part and then summing over all spatial grid
points $j$, we get the following charge dissipation law
\begin{equation}
  \sum_{j}|u_{j}^{n+1}|^{2}=\sum_{j}|u_{j}^{n}|^{2}-2\alpha\sum_{j}|u_{j}^{n+\frac{1}{2}}|^{2},
\end{equation}
from which we may also see some kind of dissipation relation of charge. To investigate this property further, we take
 $e^{-2\alpha t}\mathcal{Q}(0)$ as the standard criterion with $\mathcal{Q}(0)$ denoting the initial charge, since   we know that from Theorem \ref{chargele} it is the relationship satisfied by the charge at time $t$ in the continuous problem.
 Fig. \ref{charge00} plots the discrete averaged charge with the exact one, the left ones are exact and two numerical  charge evolution relationships for different values of $\alpha$; while the right ones are the residual of charge: $2\alpha\Delta t-\log\Big(\frac{Q^{n}}{Q^{n+1}}\Big)$, with $Q^{n}$ being discrete charge of stochastic conformal multi-symplectic and Crank-Nicolson methods respectively. We may observe that the stochastic conformal multi-symplectic method provides better fits for the dissipation rate than the non-conformal method. And as the growth of $\alpha$, the residual of Crank-Nicolson scheme becomes larger; while the residual of stochastic conformal multi-symplectic method remains zero.

\begin{figure}[th!]
\begin{center}
  \includegraphics[height=4.0cm,width=4.5cm]{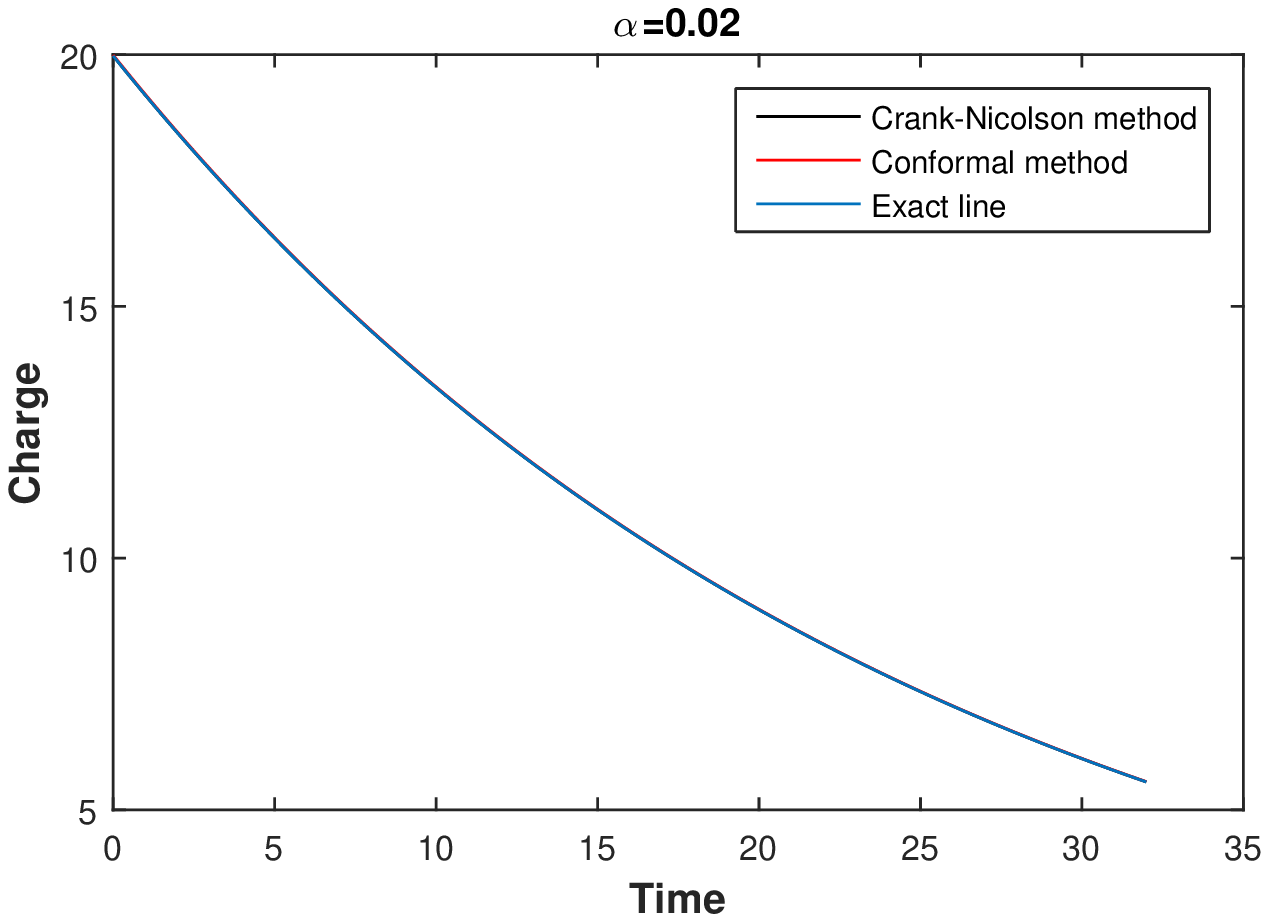}
  \includegraphics[height=4.0cm,width=4.5cm]{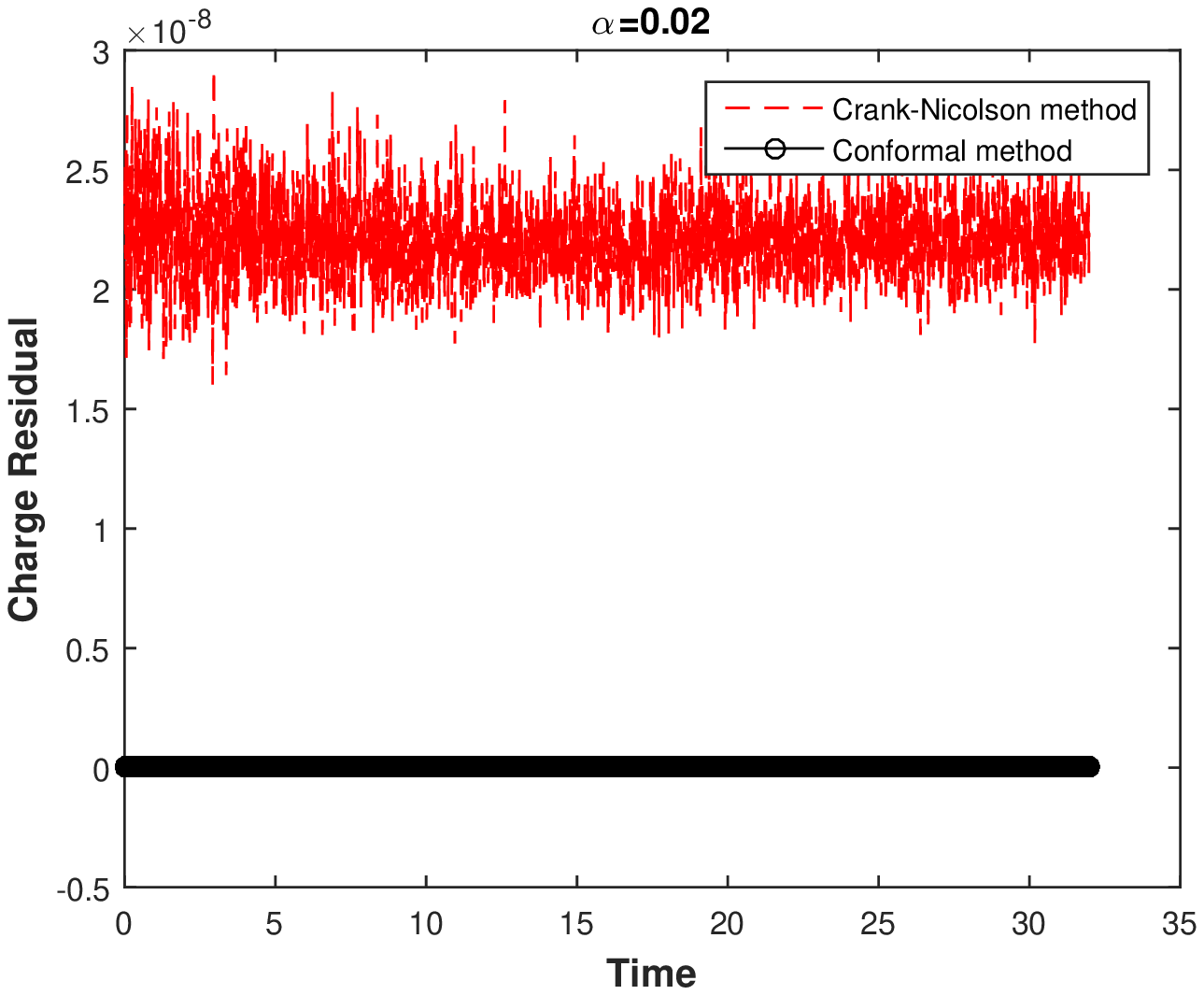}\\
  \includegraphics[height=4.0cm,width=4.5cm]{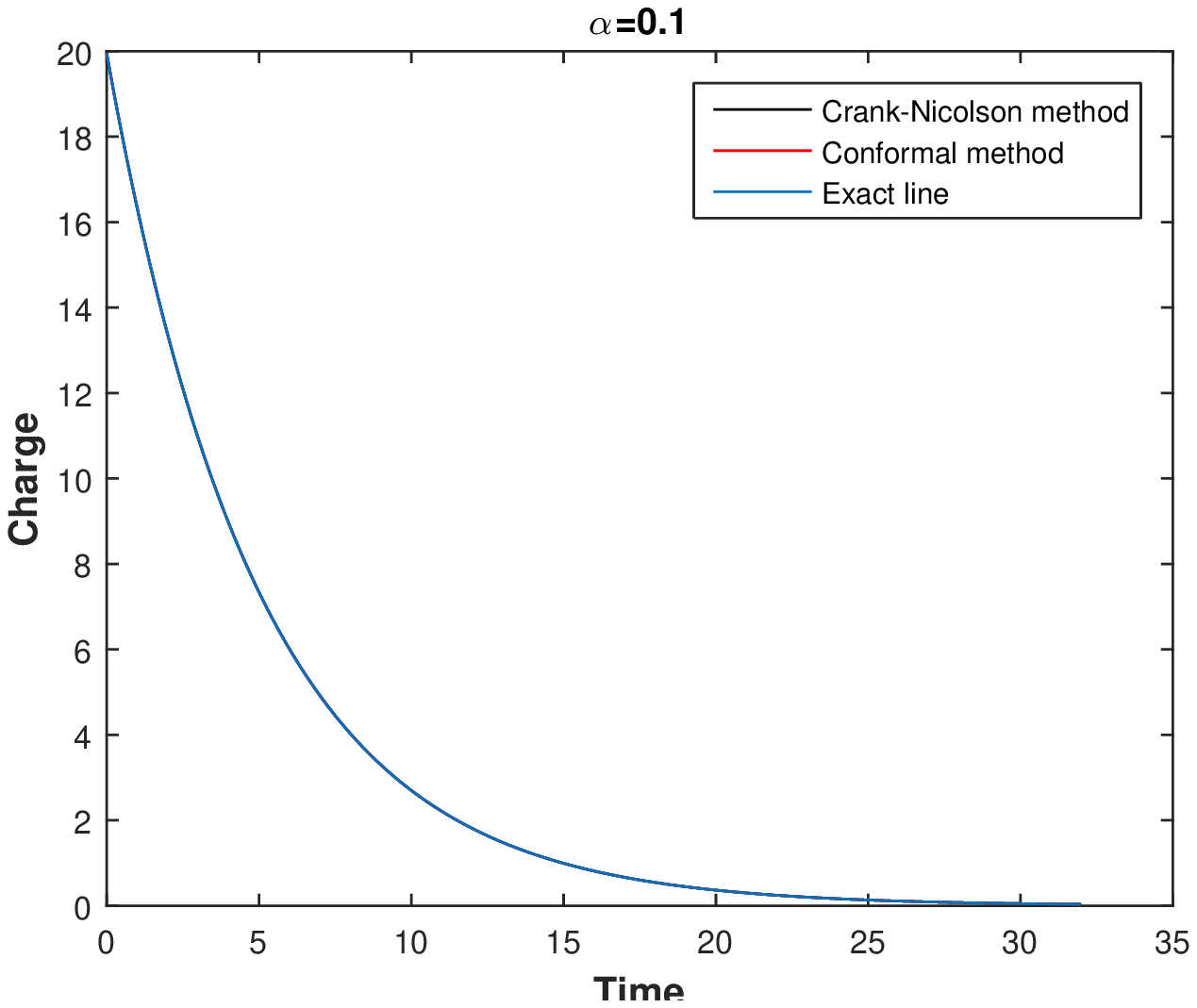}
  \includegraphics[height=4.0cm,width=4.5cm]{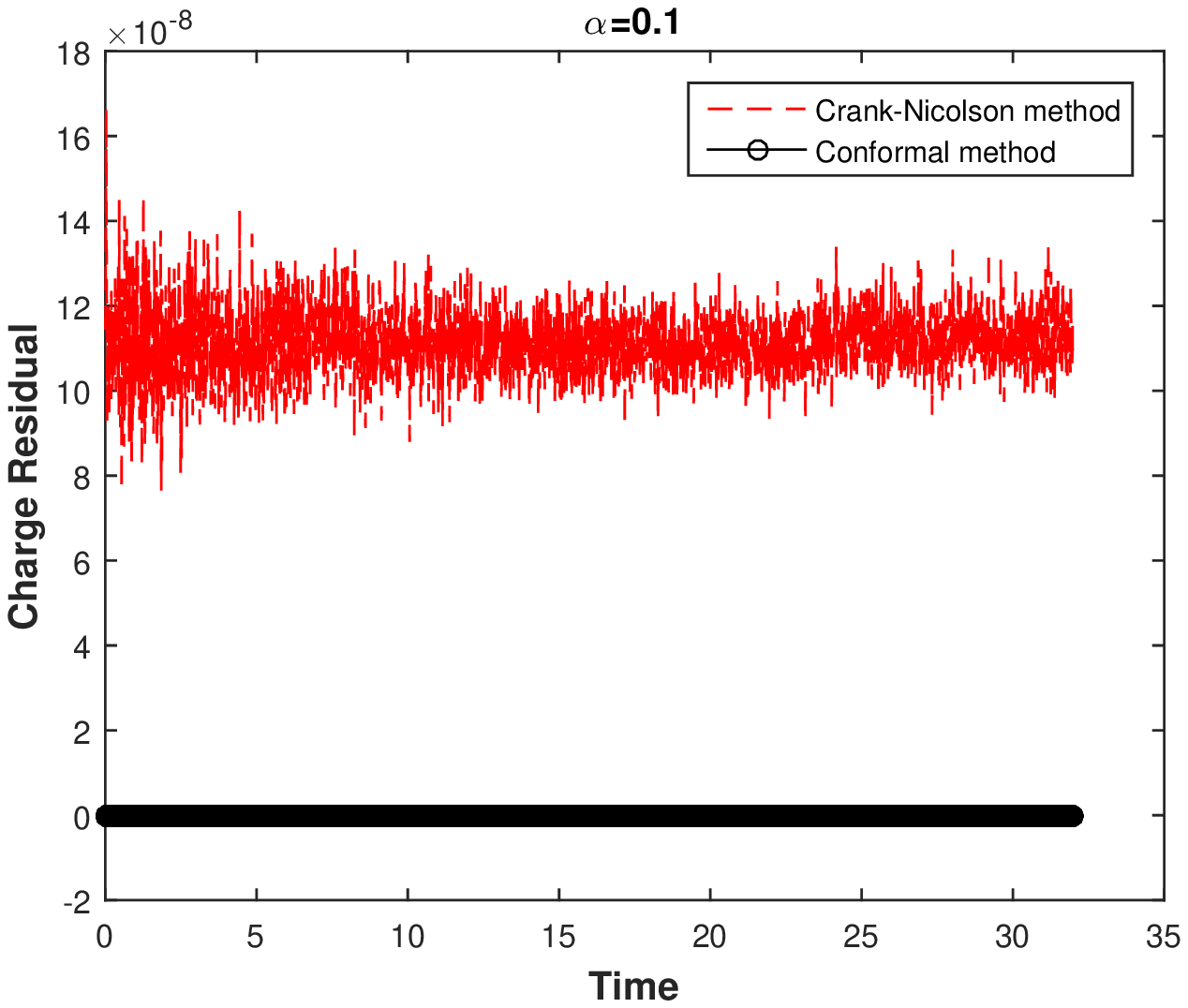}
  \caption{Evolution of the charge dissipation law averaged over 100 trajectories with $\Delta t=0.01$,~$T=32$.}\label{charge00}
  \end{center}
\end{figure}

Fig. \ref{energy_res} exhibits the discrete average energy over 100 trajectories with $\alpha=0.02$ and $\alpha=0.1$, respectively. We notice that
 for both cases the energy shows a rapid increase at the beginning, but for small $\alpha$ the energy decreases slowly while for large $\alpha$, it drops down
 severely.

\begin{figure}[th!]
\begin{center}
  \includegraphics[height=4.0cm,width=5.0cm]{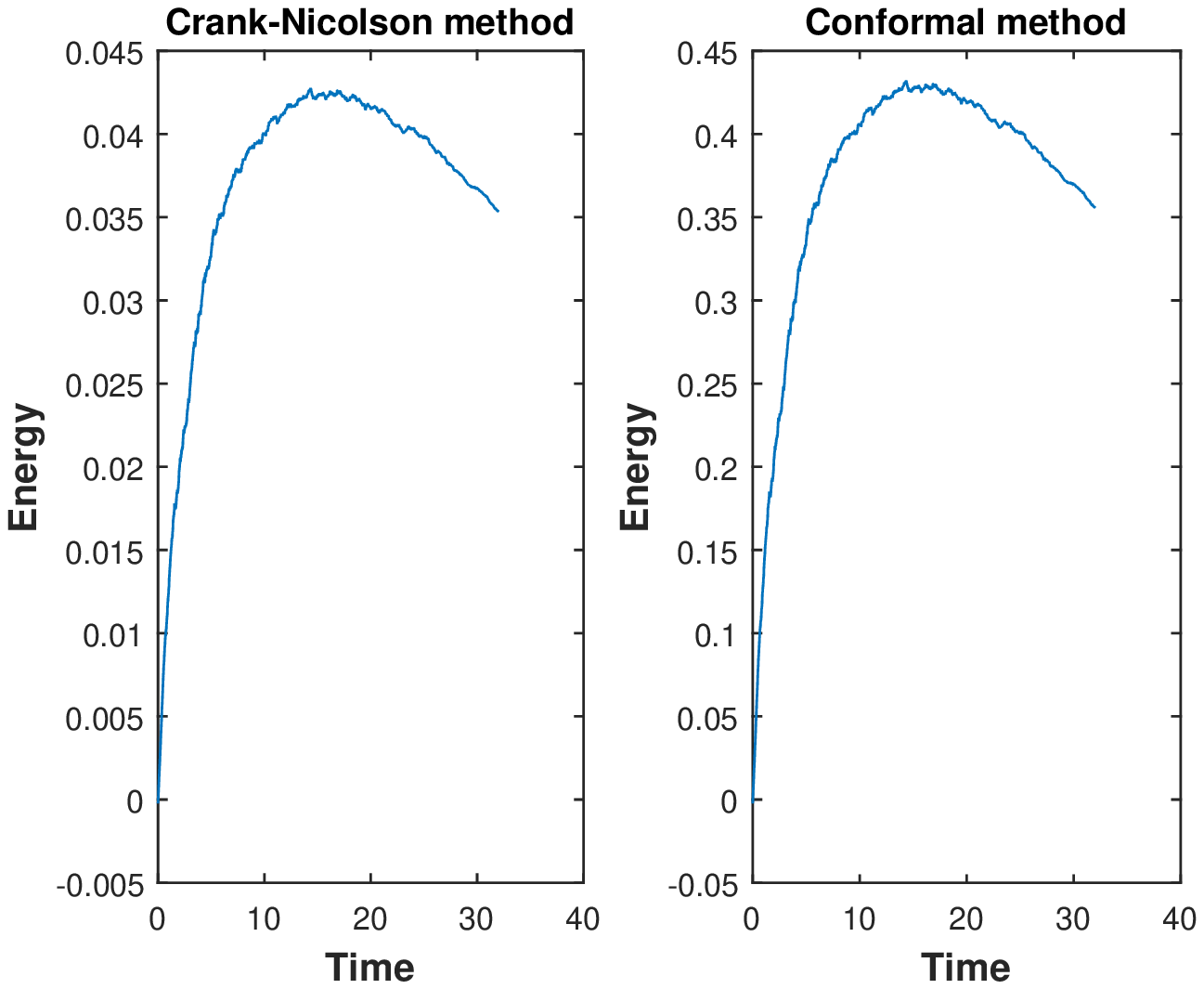}
  \includegraphics[height=4.0cm,width=5.0cm]{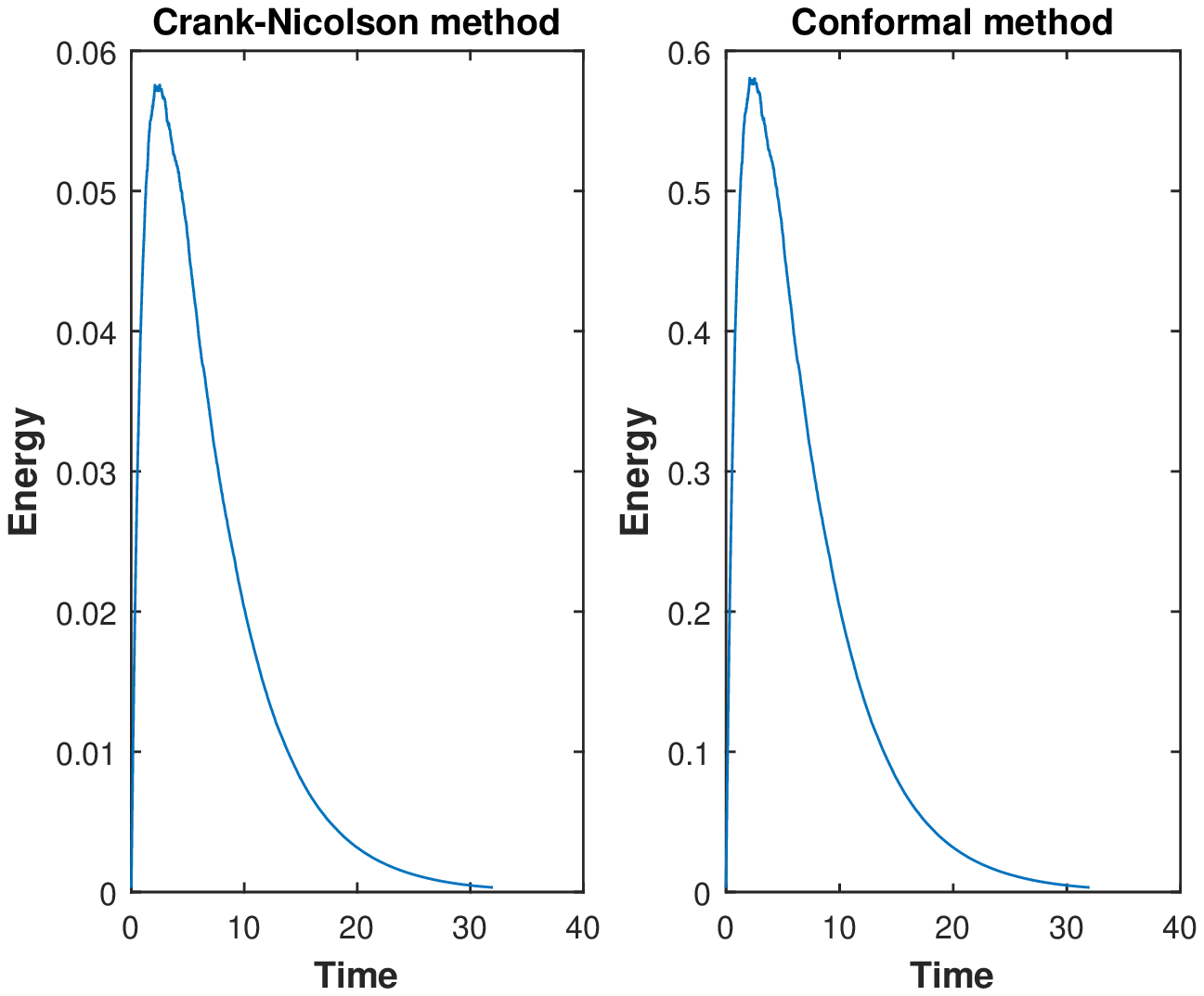}
  \caption{Evolution of the average energy over 100 trajectories with $\Delta t=0.01$, $T=32$. Left: $\alpha=0.02$; Right: $\alpha=0.1$}\label{energy_res}
  \end{center}
\end{figure}

\vspace{3mm}
\noindent{\it Example 3.}
We investigate the convergence order in temporal direction of the proposed stochastic conformal multi-symplectic method in this experiment. Define
\begin{equation*}
  e_{\Delta t}^{strong}:=\Big(\mathbb{E}\|u(\cdot,T)-u_{T}(\cdot)\|^{2}\Big)^{\frac{1}{2}},
\end{equation*}
 let $[x_{L},x_{R}]=[-1,1]$, $\Delta x=\frac{1}{256}$, $T=\frac{1}{4}$ and $u|_{t=0}=\sin(\pi x)$, and plot
 $e_{\Delta t}^{strong}$ against $\Delta t$ on a log-log scale with various combinations of $(\alpha,~\varepsilon)$ for the truncated number of Wiener process $1\leq M\leq 8$. Although we do not know the explicit form of the solution to \eqref{NLS}, we take the stochastic conformal multi-symplectic method with small time stepsize $\Delta t=2^{-14}$ as the reference solution. We then compare it to
the stochastic conformal multi-symplectic method evaluated with time steps $(2^{1}\Delta t, 2^{3}\Delta t, 2^{5}\Delta t, 2^{7}\Delta t)$ in order to estimate the rate of convergence.

We consider $\varepsilon=0$ first: Fig. \ref{order_det} shows order 2 for the $\mathbb{L}^{2}$-error $\|u(\cdot,T)-u_{T}(\cdot)\|_{\mathbb{L}^{2}}$ of the conformal multi-symplectic method for different sizes of $\alpha$.
\begin{figure}[th!]
\begin{center}
  \includegraphics[height=3.5cm,width=3.7cm]{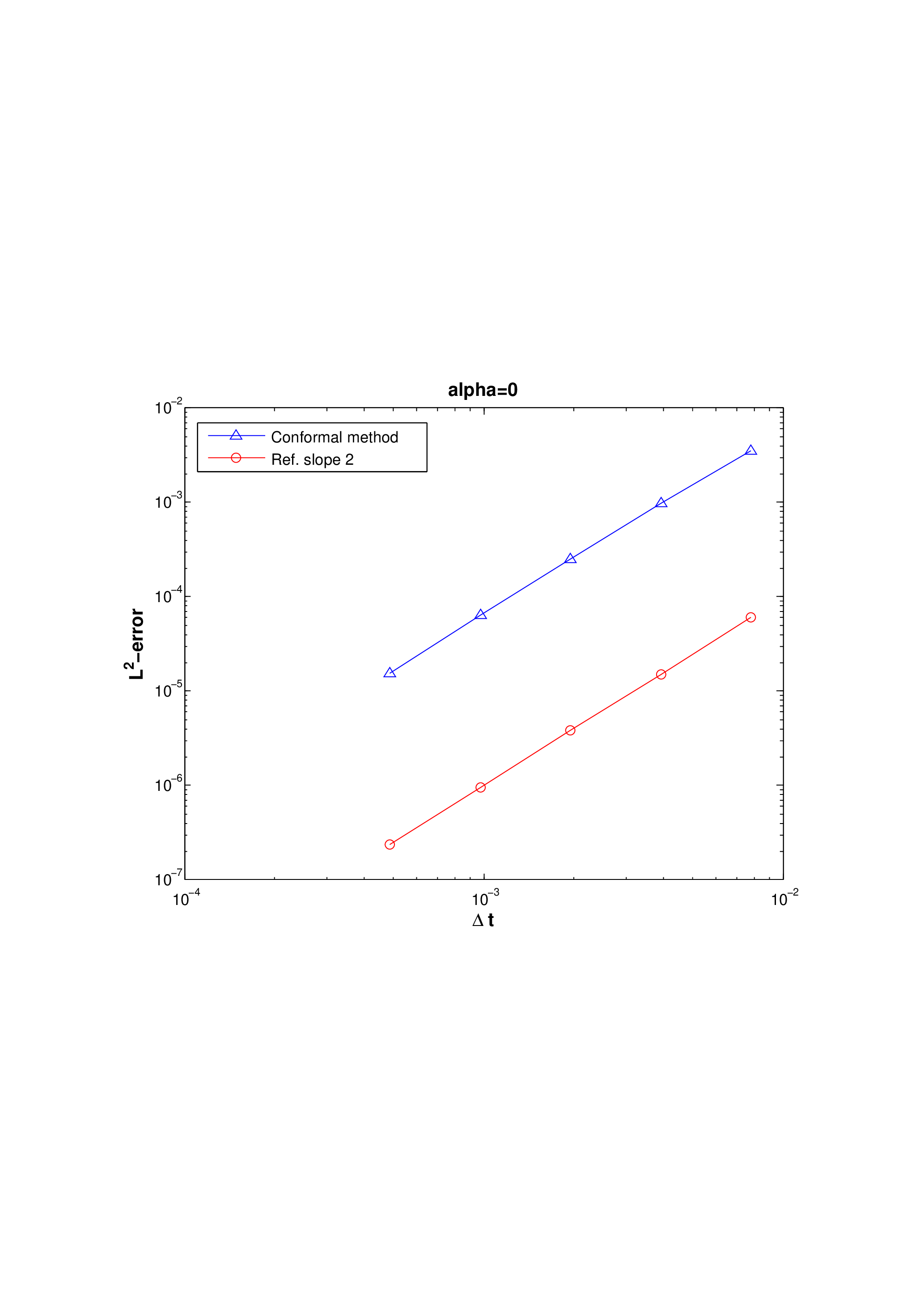}
  \includegraphics[height=3.5cm,width=3.7cm]{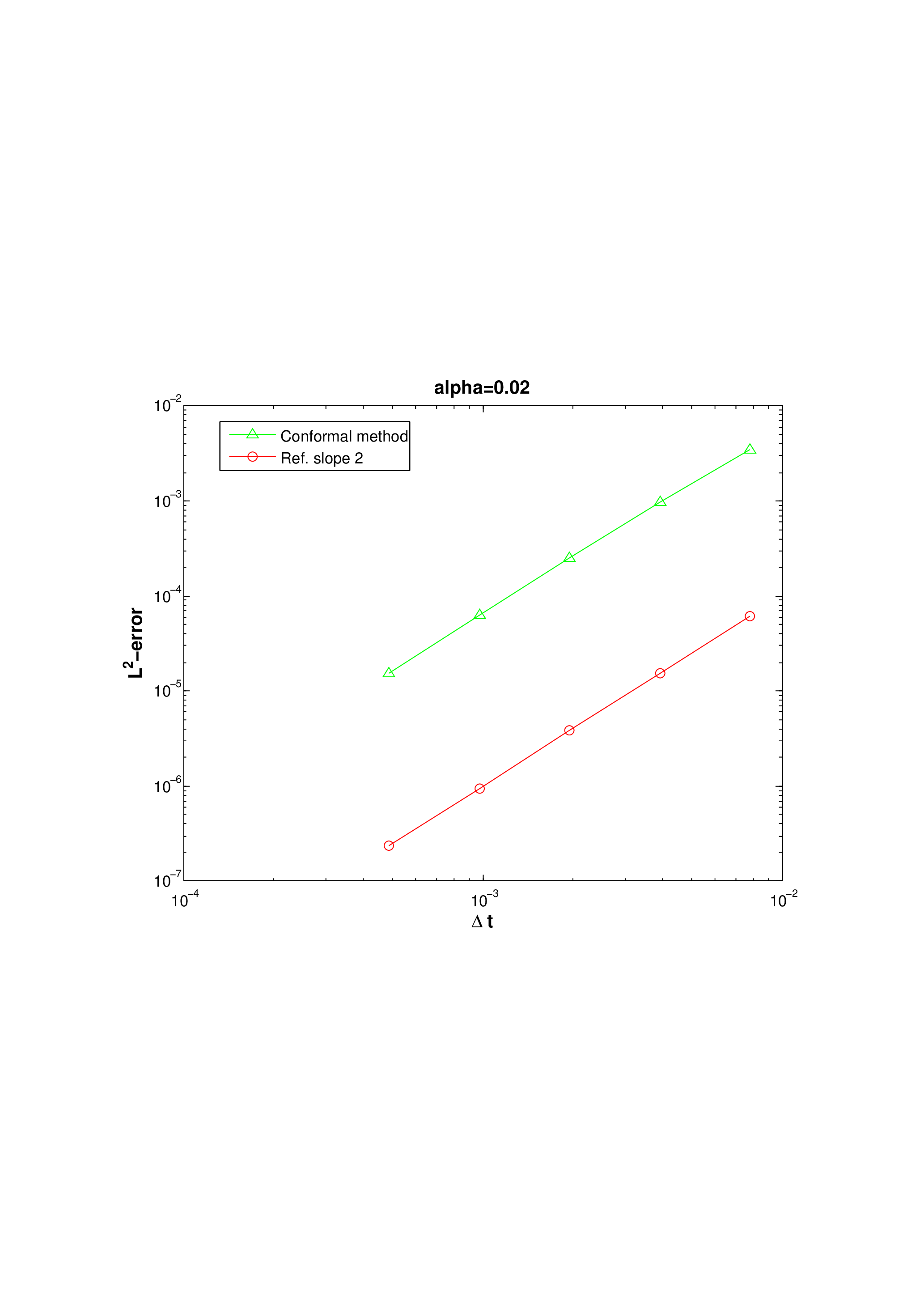}
  \includegraphics[height=3.5cm,width=3.7cm]{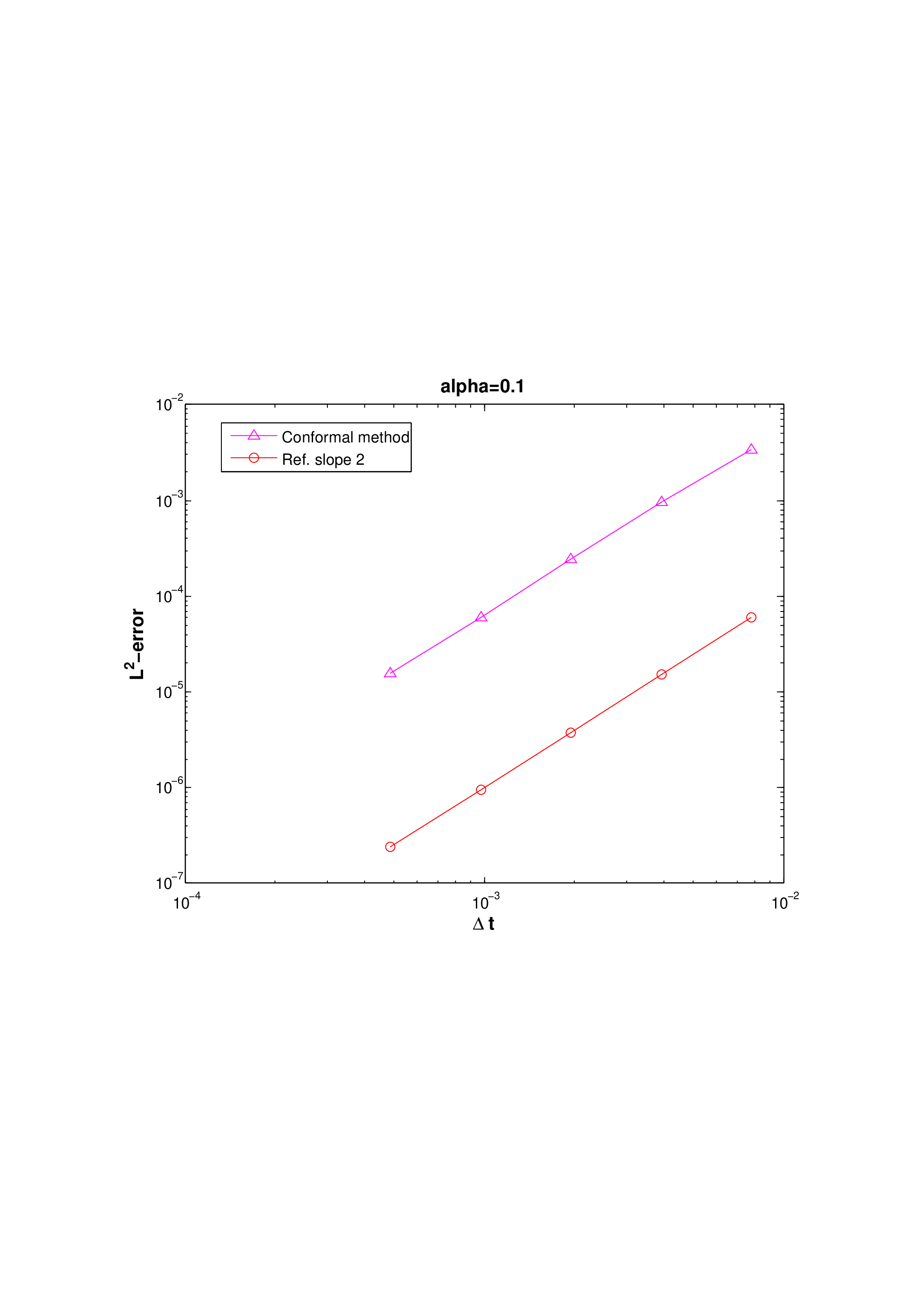}
     \caption{Rates of convergence for the deterministic case with $\alpha=0$, $\alpha=0.02$ and $\alpha=0.1$, respectively. }\label{order_det}
  \end{center}
\end{figure}

The observations are different in the stochastic case ($\varepsilon=\sqrt{2}$) where different sorts of Wiener processes depending on $M$ are used. Fig. \ref{order} presents the mean-square convergence order for the $\mathbb{L}^{2}$-error $e_{\Delta t}^{strong}$ with various sizes of $\alpha$. And 500 realizations are chosen to approximate the expectations. As is displayed in Fig. \ref{order}, the strong order of of convergence $e_{\Delta t}^{strong}$ drops from approximately 1 to 0.5 for values 1 to 8 of $M$. It is an interesting and open problem to investigate theoretically the convergence order of the proposed stochastic conformal multi-symplectic method.

\begin{figure}[th!]
\begin{center}
  \includegraphics[height=3.5cm,width=3.7cm]{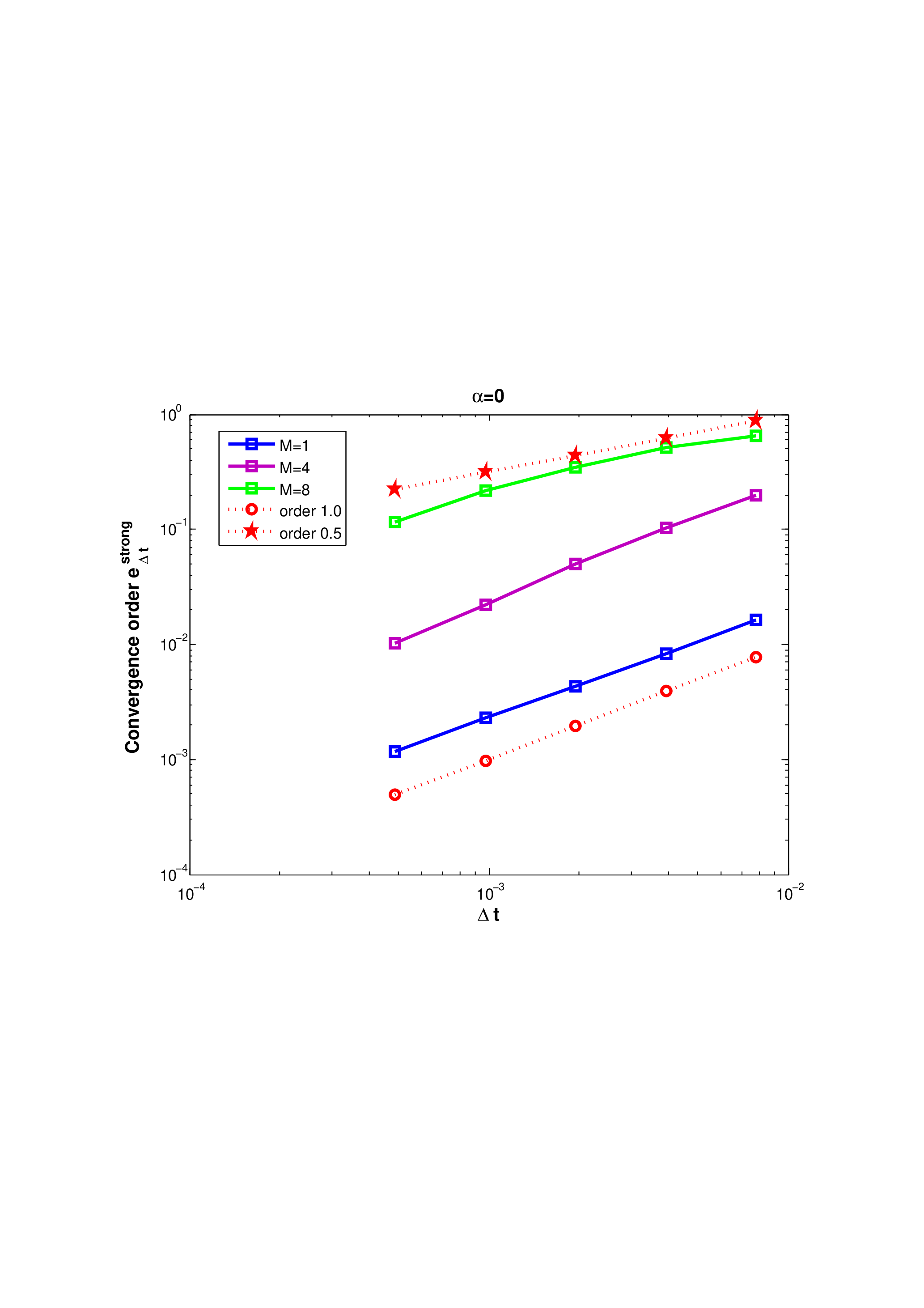}
  \includegraphics[height=3.5cm,width=3.7cm]{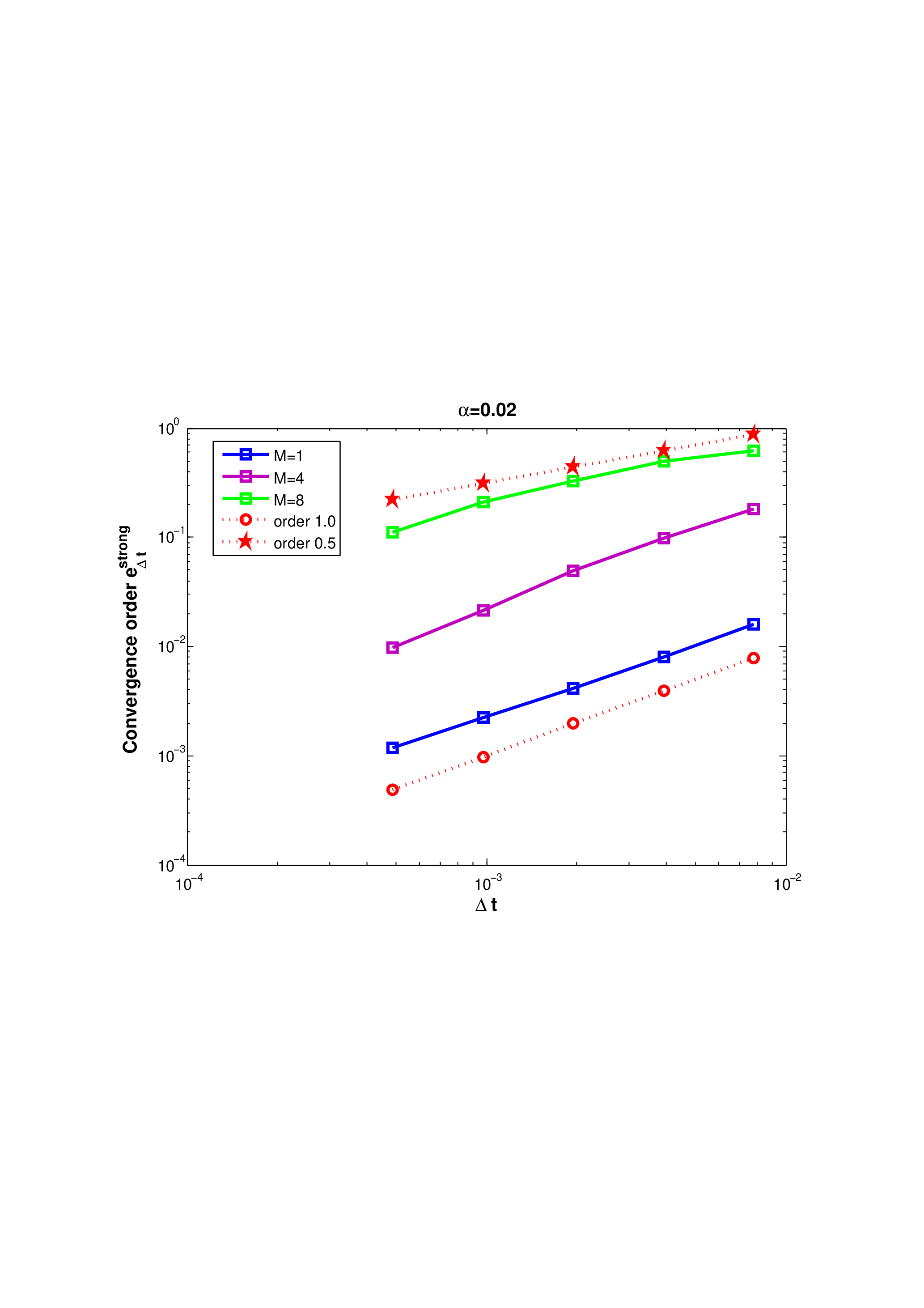}
  \includegraphics[height=3.5cm,width=3.7cm]{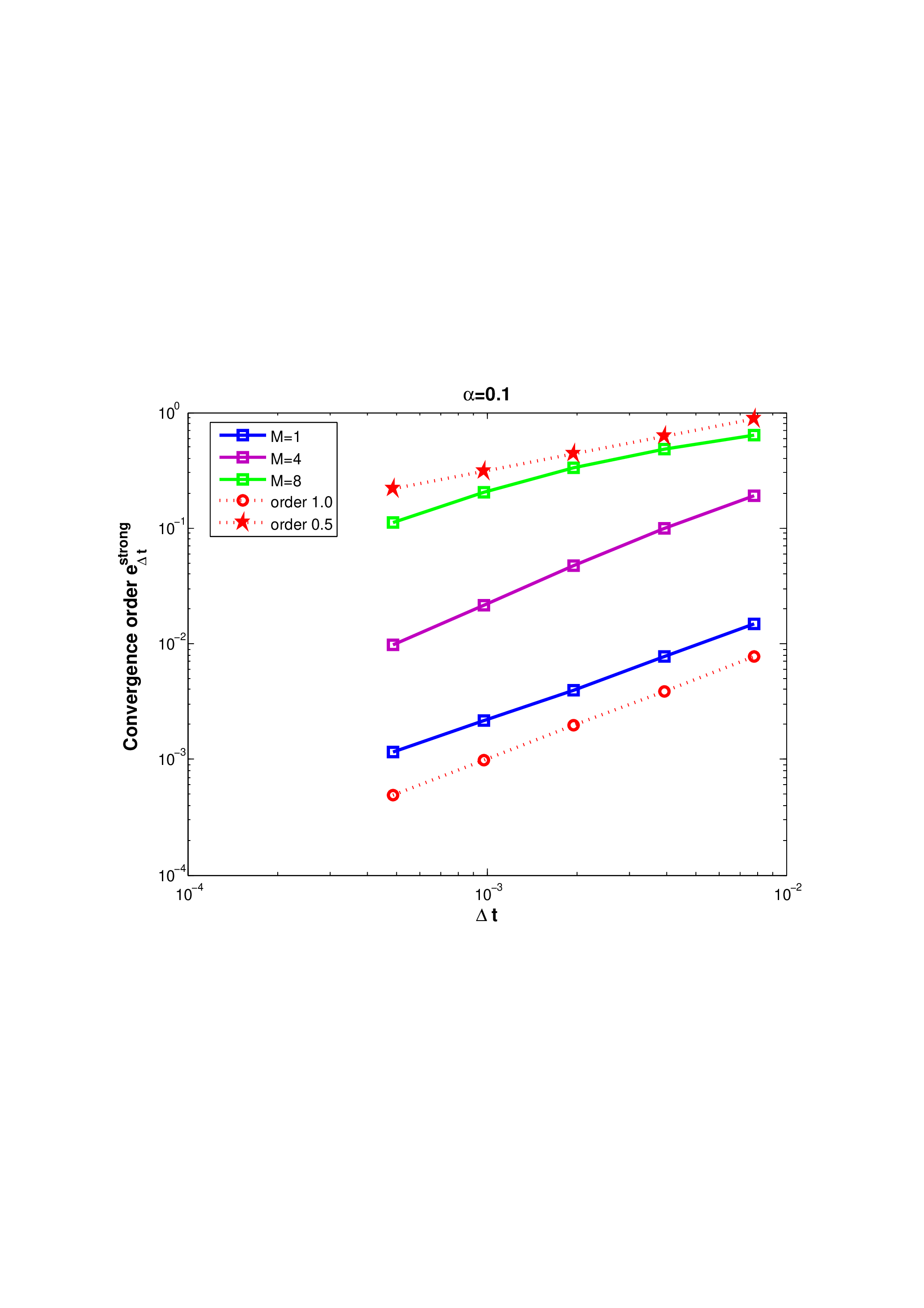}
   \caption{Mean square error versus time steps with $\alpha=0$, $\alpha=0.02$ and $\alpha=0.1$ for $1\leq M\leq 8$, respectively.}\label{order}
  \end{center}
\end{figure}

\section{Conclusions and remarks}
In this paper, we firstly investigate the intrinsic property --- stochastic conformal multi-symplectic structure --- of a class of damped stochastic Hamiltonian PDEs. It has been widely recognized that the structure-preserving methods have the remarkable superiority to conventional numerical methods when applied to Hamiltonian system, such as long-term behavior, structure-preserving, etc. In order to numerically inherit properties possessed by the damped stochastic Hamiltonian PDEs, we propose the stochastic conformal multi-symplectic numerical method, and analyze discrete versions of the corresponding properties.

Secondly, we take the damped stochastic NLS equation as an example, and present its equivalent form of damped stochastic Hamiltonian PDEs. And a stochastic conformal multi-symplectic numerical method is proposed to discretize the damped stochastic NLS equation. We show that the proposed conformal method is equivalent to apply the standard multi-symplectic numerical method to the transformed equations. It is proved that the proposed numerical method not only could preserve the discrete stochastic conformal multi-symplectic structure, but also could preserve the discrete charge exponential dissipation law exactly as the continuous problem. Moreover, the concrete relationship satisfied by the energy both for continuous problem and the numerical scheme are derived.

Finally, numerical experiments are preformed to study the good performance of the proposed stochastic conformal multi-symplectic method, compared with  a Crank-Nicolson type method when applied to discretize the damped stochastic NLS equation. It is noted that the stochastic conformal multi-symplectic method preserves the amplitudes of the wave and the dissipation rate of the charge exactly, but in contrast may alter the phases which mean the wave speeds. The discrete averaged energies all show rapid increase at the beginning and then fall down with different speed for different damping. Numerical results suggest that the stochastic conformal methods are superior to standard ones for long time simulation. At last, the mean square convergence order in temporal direction of the proposed stochastic conformal multi-symplectic method is studied numerically, which may present some instructions to investigate it theoretically and rigorously in the further work.
\appendix
\section{Proof of Theorem \ref{conlemma}}
\begin{proof}
Let $d(z_{t_{1}}^{x})^{i}$, $d(z_{t_{0}}^{x})^{i}$, $d(z_{t}^{x_{1}})^{i}$, $d(z_{t}^{x_{0}})^{i}$ denote the $i$-th components of the differential forms $dz(t_{1},x)$, $dz(t_{0},x)$, $dz(t,x_{1})$, $dz(t,x_{0})$ respectively. And let $M^{ij}$, $K^{ij}$, $D^{ij}$ be the
  elements of the matrices $M$, $K$, $D$, respectively.
  \begin{equation*}
  \begin{split}
   & \int_{x_{0}}^{x_{1}}\omega(t_{1},x)dx-\int_{x_{0}}^{x_{1}}\omega(t_{0},x)dx\\
  &  =\int_{x_{0}}^{x_{1}}\Big[\sum_{i=1}^{d}d(z_{t_{1}}^{x})^{i}\wedge\Big(\sum_{j=1}^{d}M^{ij}d(z_{t_{1}}^{x})^{j}\Big)-
    \sum_{i=1}^{d}d(z_{t_{0}}^{x})^{i}\wedge\Big(\sum_{j=1}^{d}M^{ij}d(z_{t_{0}}^{x})^{j}\Big)\Big]dx\\
    &=\int_{x_{0}}^{x_{1}}\sum_{i=1}^{d}\sum_{j=1}^{d}M^{ij}\Big(d(z_{t_{1}}^{x})^{i}\wedge d(z_{t_{1}}^{x})^{j}-d(z_{t_{0}}^{x})^{i}\wedge d(z_{t_{0}}^{x})^{j}\Big)dx\\
    &=\int_{x_{0}}^{x_{1}}\sum_{i=1}^{d}\sum_{j=1}^{d}M^{ij}\bigg[\Big(\sum_{\ell=1}^{d}
    \frac{\partial(z_{t_{1}}^{x})^{i}}{\partial(z_{t_{0}}^{x_{0}})^{\ell}}d(z_{t_{0}}^{x_{0}})^{\ell}\Big)\wedge
    \Big(\sum_{k=1}^{d}\frac{\partial(z_{t_{1}}^{x})^{j}}{\partial(z_{t_{0}}^{x_{0}})^{k}}d(z_{t_{0}}^{x_{0}})^{k}\Big)\\
    &\quad\quad-\Big(\sum_{\ell=1}^{d}
    \frac{\partial(z_{t_{0}}^{x})^{i}}{\partial(z_{t_{0}}^{x_{0}})^{\ell}}d(z_{t_{0}}^{x_{0}})^{\ell}\Big)\wedge
    \Big(\sum_{k=1}^{d}\frac{\partial(z_{t_{0}}^{x})^{j}}{\partial(z_{t_{0}}^{x_{0}})^{k}}d(z_{t_{0}}^{x_{0}})^{k}\Big)\bigg]dx\\
    &=\int_{x_{0}}^{x_{1}}\sum_{i=1}^{d}\sum_{j=1}^{d}M^{ij}\bigg[\sum_{\ell=1}^{d}\sum_{k=1}^{d}\Big(
    \frac{\partial(z_{t_{1}}^{x})^{i}}{\partial(z_{t_{0}}^{x_{0}})^{\ell}}\frac{\partial(z_{t_{1}}^{x})^{j}}{\partial(z_{t_{0}}^{x_{0}})^{k}}
    -\frac{\partial(z_{t_{0}}^{x})^{i}}{\partial(z_{t_{0}}^{x_{0}})^{\ell}}\frac{\partial(z_{t_{0}}^{x})^{j}}{\partial(z_{t_{0}}^{x_{0}})^{k}}\Big)
    d(z_{t_{0}}^{x_{0}})^{\ell}\wedge d(z_{t_{0}}^{x_{0}})^{k}\bigg]dx\\
    &=\sum_{\ell=1}^{d}\sum_{k=1}^{d}\mathcal{C}_{\ell,k}(t_{1},x_{1})d(z_{t_{0}}^{x_{0}})^{\ell}\wedge d(z_{t_{0}}^{x_{0}})^{k},
    \end{split}
  \end{equation*}
  where
  \begin{equation}\label{c}
    \mathcal{C}_{\ell,k}(t_{1},x_{1})=\sum_{i=1}^{d}\sum_{j=1}^{d}M^{ij}\int_{x_{0}}^{x_{1}}\Big(
    \frac{\partial(z_{t_{1}}^{x})^{i}}{\partial(z_{t_{0}}^{x_{0}})^{\ell}}\frac{\partial(z_{t_{1}}^{x})^{j}}{\partial(z_{t_{0}}^{x_{0}})^{k}}
    -\frac{\partial(z_{t_{0}}^{x})^{i}}{\partial(z_{t_{0}}^{x_{0}})^{\ell}}\frac{\partial(z_{t_{0}}^{x})^{j}}{\partial(z_{t_{0}}^{x_{0}})^{k}}\Big)dx.
  \end{equation}
  Similarly, we have
  \begin{align}
     \int_{t_{0}}^{t_{1}}\kappa(t,x_{1})dx-\int_{t_{0}}^{t_{1}}\kappa(t,x_{0})dx
    =\sum_{\ell=1}^{d}\sum_{k=1}^{d}\mathcal{D}_{\ell,k}(t_{1},x_{1})d(z_{t_{0}}^{x_{0}})^{\ell}\wedge d(z_{t_{0}}^{x_{0}})^{k}
  \end{align}
  with
  \begin{equation}\label{d}
        \mathcal{D}_{\ell,k}(t_{1},x_{1})=\sum_{i=1}^{d}\sum_{j=1}^{d}K^{ij}\int_{t_{0}}^{t_{1}}\Big(
    \frac{\partial(z_{t}^{x_{1}})^{i}}{\partial(z_{t_{0}}^{x_{0}})^{\ell}}\frac{\partial(z_{t}^{x_{1}})^{j}}{\partial(z_{t_{0}}^{x_{0}})^{k}}
    -\frac{\partial(z_{t}^{x_{0}})^{i}}{\partial(z_{t_{0}}^{x_{0}})^{\ell}}\frac{\partial(z_{t}^{x_{0}})^{j}}{\partial(z_{t_{0}}^{x_{0}})^{k}}\Big)dx.
  \end{equation}
  And
  \begin{equation*}
  \begin{split}
    &-\int_{x_{0}}^{x_{1}}\int_{t_{0}}^{t_{1}}a\omega(t,x)dtdx
    -\int_{x_{0}}^{x_{1}}\int_{t_{0}}^{t_{1}}b\kappa(t,x)dtdx\\
    &=-\int_{x_{0}}^{x_{1}}\int_{t_{0}}^{t_{1}}\Bigg[a\sum_{i=1}^{d}d(z_{t}^{x})^{i}\wedge \Big(\sum_{j=1}^{d}M^{ij}d(z_{t}^{x})^{j}\Big)
    +b\sum_{i=1}^{d}d(z_{t}^{x})^{i}\wedge \Big(\sum_{j=1}^{d}K^{ij}d(z_{t}^{x})^{j}\Big)\Bigg]dtdx\\
    &=-\int_{x_{0}}^{x_{1}}\int_{t_{0}}^{t_{1}}\sum_{i=1}^{d}\sum_{j=1}^{d}(aM^{ij}+bK^{ij})d(z_{t}^{x})^{i}\wedge d(z_{t}^{x})^{j}dtdx\\
    &=-\sum_{i=1}^{d}\sum_{j=1}^{d}\int_{x_{0}}^{x_{1}}\int_{t_{0}}^{t_{1}}(aM^{ij}+bK^{ij})\Big(\sum_{\ell=1}^{d}\frac{\partial (z_{t}^{x})^{i}}{\partial (z_{t_{0}}^{x_{0}})^{\ell}}d(z_{t_{0}}^{x_{0}})^{\ell}\Big)\wedge \Big(\sum_{k=1}^{d}\frac{\partial (z_{t}^{x})^{j}}{\partial (z_{t_{0}}^{x_{0}})^{k}}d(z_{t_{0}}^{x_{0}})^{k}\Big)dtdx\\
    &=-\sum_{i=1}^{d}\sum_{j=1}^{d}\mathcal{E}_{\ell,k}(t_{1},x_{1})d(z_{t_{0}}^{x_{0}})^{\ell}\wedge d(z_{t_{0}}^{x_{0}})^{k},
    \end{split}
  \end{equation*}
  where for $D=-\frac{a}{2}M-\frac{b}{2}K$,
  \begin{equation}\label{e}
            \mathcal{E}_{\ell,k}(t_{1},x_{1})=\sum_{i=1}^{d}\sum_{j=1}^{d}(-2D^{ij})\int_{x_{0}}^{x_{1}}\int_{t_{0}}^{t_{1}}
    \frac{\partial(z_{t}^{x})^{i}}{\partial(z_{t_{0}}^{x_{0}})^{\ell}}\frac{\partial(z_{t}^{x})^{j}}{\partial(z_{t_{0}}^{x_{0}})^{k}}dx.
  \end{equation}
  Equation \eqref{conformal conservation law_integral} if fulfilled if and only if
  \begin{equation}\label{eq1}
    \sum_{\ell=1}^{d}\sum_{k=1}^{d}\Big(\mathcal{C}_{\ell,k}(t_{1},x_{1})+\mathcal{D}_{\ell,k}(t_{1},x_{1})+\mathcal{E}_{\ell,k}(t_{1},x_{1})
    \Big)d(z_{t_{0}}^{x_{0}})^{\ell}\wedge d(z_{t_{0}}^{x_{0}})^{k}=0.
  \end{equation}
  Set $t_{0}$, $x_{0}$ fixed, and change the variables $t_{1}$, $x_{1}$. It's not difficult to check that, if $t_{1}$ is taken as the initial time $t_{0}$, thus $x_1=x_0$, then we have $\mathcal{C}_{\ell,k}(t_{1},x_{1})\equiv 0$, $\mathcal{D}_{\ell,k}(t_{1},x_{1})\equiv 0$ and $\mathcal{E}_{\ell,k}(t_{1},x_{1})\equiv 0$,  $\ell,k=1,\cdots,d$, because the upper and lower integral limits become the same.

  So the condition \eqref{eq1} holds, if the differential of $\mathcal{C}_{\ell,k}(t_{1},x_{1})+\mathcal{D}_{\ell,k}(t_{1},x_{1})+\mathcal{E}_{\ell,k}(t_{1},x_{1})$ with respect to $t_{1}$ can be proved to be zero, i.e.,
  \begin{equation}\label{equi}
    d_{t_{1}}\mathcal{C}_{\ell,k}(t_{1},x_{1})+d_{t_{1}}\mathcal{D}_{\ell,k}(t_{1},x_{1})+d_{t_{1}}\mathcal{E}_{\ell,k}(t_{1},x_{1})=0,\quad, \ell,k=1,\cdots,d.
  \end{equation}
  Consider the $i$-th component equation of \eqref{SFDHPDE},
  \begin{equation}
  \begin{split}
    \sum_{j=1}^{d}M^{ij}d_{t_{1}}(z_{t_{1}}^{x})^{j}+\sum_{j=1}^{d}K^{ij}\frac{\partial(z_{t_{1}}^{x})^{j}}{\partial x}dt_{1}
    &=\frac{\partial S_{1}}{\partial (z_{t_{1}}^{x})^{i}}dt_{1}+\frac{\partial S_{2}}{\partial (z_{t_{1}}^{x})^{i}}dt_{1}\circ d_{t_{1}}W\\
   & +\sum_{j=1}^{d}D^{ij}(z_{t_{1}}^{x})^{j}dt_{1}+F^{i}(t,x)dt_{1}.
  \end{split}
  \end{equation}
  Taking partial derivative with respect to $(z_{t_{0}}^{x_{0}})^{k}$, we get
  \begin{align}\label{eq2}
   & \sum_{j=1}^{d}M^{ij}d_{t_{1}}\Big(\frac{\partial(z_{t_{1}}^{x})^{j}}{\partial(z_{t_{0}}^{x_{0}})^{k}}\Big)+\sum_{j=1}^{d}K^{ij}\frac{\partial}{\partial x}\Big(\frac{\partial(z_{t_{1}}^{x})^{j}}{\partial(z_{t_{0}}^{x_{0}})^{k}}\Big)dt_{1}=\sum_{j=1}^{d}\frac{\partial^{2}S_{1}(z)}{\partial(z_{t_{1}}^{x})^{i}\partial(z_{t_{1}}^{x})^{j}}
    \frac{\partial(z_{t_{1}}^{x})^{j}}{\partial(z_{t_{0}}^{x_{0}})^{k}}dt_{1}\nonumber\\
    &\quad\quad+\sum_{j=1}^{d}\frac{\partial^{2}S_{2}(z)}{\partial(z_{t_{1}}^{x})^{i}\partial(z_{t_{1}}^{x})^{j}}
    \frac{\partial(z_{t_{1}}^{x})^{j}}{\partial(z_{t_{0}}^{x_{0}})^{k}}d_{t_{1}}W
    +\sum_{j=1}^{d}D^{ij}\frac{\partial(z_{t_{1}}^{x})^{j}}{\partial(z_{t_{0}}^{x_{0}})^{k}}dt_{1}.
  \end{align}
  Similarly, for the $j$-th component equation of \eqref{SFDHPDE}, we take partial derivative with respect to $(z_{t_{0}}^{x_{0}})^{\ell}$
 and obtain
     \begin{align}\label{eq3}
   & \sum_{i=1}^{d}M^{ji}d_{t_{1}}\Big(\frac{\partial(z_{t_{1}}^{x})^{i}}{\partial(z_{t_{0}}^{x_{0}})^{\ell}}\Big)+\sum_{i=1}^{d}K^{ji}\frac{\partial}{\partial x}\Big(\frac{\partial(z_{t_{1}}^{x})^{i}}{\partial(z_{t_{0}}^{x_{0}})^{\ell}}\Big)dt_{1}=\sum_{i=1}^{d}\frac{\partial^{2}S_{1}(z)}{\partial(z_{t_{1}}^{x})^{j}\partial(z_{t_{1}}^{x})^{i}}
    \frac{\partial(z_{t_{1}}^{x})^{i}}{\partial(z_{t_{0}}^{x_{0}})^{\ell}}dt_{1}\nonumber\\
    &\quad\quad+\sum_{i=1}^{d}\frac{\partial^{2}S_{2}(z)}{\partial(z_{t_{1}}^{x})^{j}\partial(z_{t_{1}}^{x})^{i}}
    \frac{\partial(z_{t_{1}}^{x})^{i}}{\partial(z_{t_{0}}^{x_{0}})^{\ell}}d_{t_{1}}W
    +\sum_{i=1}^{d}D^{ji}\frac{\partial(z_{t_{1}}^{x})^{i}}{\partial(z_{t_{0}}^{x_{0}})^{\ell}}dt_{1}.
  \end{align}
  Due to \eqref{c}, we get
  \begin{align}\label{eq4}
    d_{t_{1}}\mathcal{C}_{\ell,k}(t_{1},x_{1})=&-\sum_{j=1}^{d}\int_{x_{0}}^{x_{1}}\Big[\sum_{i=1}^{d}M^{ji}d_{t_{1}}\Big(
    \frac{\partial(z_{t_{1}}^{x})^{i}}{\partial(z_{t_{0}}^{x_{0}})^{\ell}}\Big)\Big]\frac{\partial(z_{t_{1}}^{x})^{j}}{\partial(z_{t_{0}}^{x_{0}})^{k}}dx\nonumber\\
    &+\sum_{j=1}^{d}\int_{x_{0}}^{x_{1}}\frac{\partial(z_{t_{1}}^{x})^{i}}{\partial(z_{t_{0}}^{x_{0}})^{\ell}}\Big[\sum_{j=1}^{d}M^{ij}d_{t_{1}}\Big(
    \frac{\partial(z_{t_{1}}^{x})^{j}}{\partial(z_{t_{0}}^{x_{0}})^{k}}\Big)\Big]dx.
  \end{align}
  Substituting \eqref{eq2} and \eqref{eq3} into \eqref{eq4}, we obtain
  \begin{align}\label{cc}
    d_{t_{1}}\mathcal{C}_{\ell,k}(t_{1},x_{1})=&-\sum_{i=1}^{d}\sum_{j=1}^{d}K^{ij}\Big(\frac{\partial(z_{t_{1}}^{x_{1}})^{i}}{\partial(z_{t_{0}}^{x_{0}})^{\ell}}
    \frac{\partial(z_{t_{1}}^{x_{1}})^{j}}{\partial(z_{t_{0}}^{x_{0}})^{k}}-\frac{\partial(z_{t_{1}}^{x_{0}})^{i}}{\partial(z_{t_{0}}^{x_{0}})^{\ell}}
    \frac{\partial(z_{t_{1}}^{x_{0}})^{j}}{\partial(z_{t_{0}}^{x_{0}})^{k}}\Big)dt_{1}\nonumber\\
    &+\sum_{i=1}^{d}\sum_{j=1}^{d}2D^{ij}\int_{x_{0}}^{x_{1}}\frac{\partial(z_{t_{1}}^{x})^{i}}{\partial(z_{t_{0}}^{x_{0}})^{\ell}}
    \frac{\partial(z_{t_{1}}^{x})^{j}}{\partial(z_{t_{0}}^{x_{0}})^{k}}dxdt_{1}.
  \end{align}
  On the other hand, according to \eqref{d} and \eqref{e}, we have
  \begin{equation}\label{dd}
    d_{t_{1}}\mathcal{D}_{\ell,k}(t_{1},x_{1})=\sum_{i=1}^{d}\sum_{j=1}^{d}K^{ij}\Big(\frac{\partial(z_{t_{1}}^{x_{1}})^{i}}{\partial(z_{t_{0}}^{x_{0}})^{\ell}}
    \frac{\partial(z_{t_{1}}^{x_{1}})^{j}}{\partial(z_{t_{0}}^{x_{0}})^{k}}-\frac{\partial(z_{t_{1}}^{x_{0}})^{i}}{\partial(z_{t_{0}}^{x_{0}})^{\ell}}
    \frac{\partial(z_{t_{1}}^{x_{0}})^{j}}{\partial(z_{t_{0}}^{x_{0}})^{k}}\Big)dt_{1}
  \end{equation}
  and
  \begin{equation}\label{ee}
    d_{t_{1}}\mathcal{E}_{\ell,k}(t_{1},x_{1})=-\sum_{i=1}^{d}\sum_{j=1}^{d}2D^{ij}\int_{x_{0}}^{x_{1}}\frac{\partial(z_{t_{1}}^{x})^{i}}{\partial(z_{t_{0}}^{x_{0}})^{\ell}}
    \frac{\partial(z_{t_{1}}^{x})^{j}}{\partial(z_{t_{0}}^{x_{0}})^{k}}dxdt_{1}.
  \end{equation}
  Then the equality \eqref{equi} results from adding \eqref{cc}, \eqref{dd} and \eqref{ee} together.

  Thus the proof is finished.
\end{proof}

\end{document}